\newtheorem{Theorem}{Theorem}[section]
\newtheorem{Proposition}[Theorem]{Proposition} 
\newtheorem{Lemma}[Theorem]{Lemma}
\newtheorem{Corollary}[Theorem]{Corollary}
\newtheorem{Definition}[Theorem]{Definition}
\theoremstyle{definition}
\newtheorem{Question}{Question}
\newtheorem{Comment}[Theorem]{Comment}
\newcommand{\low}{\text{low}}
\newcommand{\wt}{\operatorname{wt}}
\newcommand{\bc}{\mathbb{C}}
\newcommand{\g}{\mathfrak{g}}
\newcommand{\Aa}{\mathcal{A}}
\newcommand{\LL}{\mathcal{L}}
\newcommand{\End}{\mbox{End}}
\newcommand{\Flip}{\mbox{Flip}}
\newcommand{\Id}{\text{Id}}
\newcommand{\C}{\mathcal{C}}
\newcommand{\modq}{{(mod \, q^{-1}\LL)}}
\theoremstyle{definition}
\begin{document}

\title[The crystal commutor and $\bar{R}$]{The crystal commutor and Drinfeld's unitarized $R$-matrix}

\author{Joel Kamnitzer}
\email{jkamnitz@aimath.org}
\address{American Institute of Mathematics\\ Palo Alto, CA}

\author{Peter Tingley}
\email{pwtingle@math.berkeley.edu}
\address{UC Berkeley, Department of Mathematics\\ Berkeley, CA}

\begin{abstract}
Drinfeld defined a unitarized R-matrix for any quantum group $U_q(\g)$. This gives a commutor for the category of $U_q(\g) $ representations, making it into a coboundary category.
Henriques and Kamnitzer defined another commutor which also gives $U_q(\g)$ representations the structure of a coboundary category. We show that a particular case of Henriques and Kamnitzer's construction agrees with Drinfeld's commutor. We then describe the action of Drinfeld's commutor on a tensor product of two crystal bases, and explain the relation to the crystal commutor. 
\end{abstract}

\maketitle


\section{Introduction}

Let $A$ and $B$ be the crystals of two representations of a simple complex Lie algebra $\g$. Using the Sch\"utzenberger involution, Henriques and Kamnitzer \cite{cactus} defined an isomorphism $\sigma_{A,B}: A \otimes B \rightarrow B \otimes A$, which they call the crystal commutor. This gives $\g$-crystals the structure of a coboundary category.

By an analogous construction, Henriques and Kamnitzer also defined a commutor $\sigma^{hk}_{V, W} : V \otimes W \rightarrow W \otimes V$, where $V$ and $W$ are finite dimensional representations of $U_q(\g)$. This gives $U_q(\g)$ representations the structure of a coboundary category. There is some choice in lifting the Sch\"utzenberger involution to representations, so the commutor here is not unique. 

There is a more standard isomorphism from $V \otimes W$ to $W \otimes V$, called the braiding. This is defined by $v \otimes w \mapsto \Flip \circ  R ( v \otimes w )$, where $R$ is the universal $R$ matrix. In \cite{D}, Drinfeld introduced a ``unitarized" $R$ matrix $\bar{R}$, and showed that the map $V \otimes W \rightarrow W \otimes V$ given by $v \otimes w \mapsto \Flip \circ  \bar{R} (v \otimes w)$ is a coboundary structure on the category of $U_q(\g)$ representations.

The first purpose of this note is to relate these two ways of putting a coboundary structure on the category of $U_q(\g)$ representations, thus answering a question from \cite{cactus}. We then show that, for any two crystal bases, Drinfeld's commutor preserves the tensor product of the lattices and acts by the crystal commutor on the tensor product of the bases (up to some negative signs).  Thus the crystal commutor is essentially a combinatorial limit of Drinfeld's commutor for representations.  This explains why the crystal commutor is a coboundary structure, and not a braiding, as one might naively expect.

This paper is organized as follows. In sections 2--6 we review some background material concerning the quantum Weyl group, crystal bases, and Drinfeld's unitarized R-matrix. In Section 7 we construct the unitarized $R$ matrix as $\bar{R}= (Y^{-1} \otimes Y^{-1}) \Delta(Y)$, where $Y$ belongs to a completion of $U_q(\g)$. In Section 8 we realize $\bar{R}$ as $(\xi'^{-1} \otimes \xi'^{-1}) \circ \xi'$, where $\xi'$ is a slight modification of Sch\"utzenberger involution. In Section 9 we describe how $\bar{R}$ acts on crystal bases. We finish with two questions.

\subsection{Acknowledgements}
We thank Arun Ram suggesting this project and for ideas which helped us get started.  We also thank Henning Andersen, Mark Haiman, Valerio Toledano Laredo, Nicolai Reshetikhin, Noah Snyder and Andrew Tolland for helpful discussions.  We thank the referee for helpful comments.  The second author thanks the American Institute of Mathematics for their support and UC Berkeley for their hospitality.

\section{Notation} \label{notation}
We must now fix some notation. For the most part we follow conventions from \cite{CP}.
\begin{itemize}
\item $\g$ is a complex simple Lie algebra with Cartan algebra $ \mathfrak{h} $, and $A = (a_{ij})_{i,j \in I}$ is its Cartan matrix. 

\item $ \langle \cdot , \cdot \rangle $ denotes the paring between $ \mathfrak{h} $ and $ \mathfrak{h}^\star $ and $ ( \cdot , \cdot) $ denotes the usual symmetric bilinear form on either $ \mathfrak{h}$ or $ \mathfrak{h}^\star $.  Fix the usual bases $ \alpha_i $ for $ \mathfrak{h}^\star $ and $ H_i $ for $\mathfrak{h}$, and recall that $ \langle H_i, \alpha_j \rangle = a_{ij} $.  

\item $ d_i = (\alpha_i, \alpha_i)/2 $, so that $ (H_i, H_j) = d_j^{-1} a_{ij} $.  Let $ B $ denote the matrix $ (d_j^{-1} a_{ij}) $. 

\item $ q_i = q^{d_i} $.  

\item $ H_\rho $ is the element of $ \mathfrak{h} $ such that $ \langle \alpha_i, H_\rho \rangle = d_i = (\alpha_i, \rho) $ for all $ i $.

\item $W$ is the Weyl group for $\g$, which is generated by the simple reflections $s_i$, for $i \in I$.

\item $\theta$ is the diagram automorphism such that $w_0 (\alpha_i) = - \alpha_{\theta(i)},$ where $w_0$ is the longest element in the Weyl group $W$.

\item $U_q(\g)$ is the quantized universal enveloping algebra associated to $\g$, generated over $\mathbb{C}(q)$ by $E_i$, $F_i$ for all  $i \in I$, and $K_H$ for $H$ in the co-weight lattice of $\g$. As usual, let $K_i= K_{H_i}.$  For convenience, we recall the exact formula for the coproduct:
\begin{equation} \label{coproduct}
\begin{cases}
\Delta{E_i} & = E_i \otimes K_i + 1 \otimes E_i \\
\Delta{F_i} &= F_i \otimes 1 + K_i^{-1} \otimes F_i \\
\Delta{K_i} &= K_i \otimes K_i
\end{cases}
\end{equation}

\item $[n]= \frac{q^n - q^{-n}}{q-q^{-1}},$ and $X^{(n)} = \frac{X^n}{[n][n-1] \cdots [2]}.$

\item $V_\lambda$ is the irreducible representation of $U_q(\g)$ with highest weight $\lambda$. Let $v_\lambda $ denote a fixed highest weight vector in this representation.
\end{itemize}

\section{The completion $\widetilde{U_q(\g)}$} \label{comp_section}

We will be working in the completion  $\widetilde{U_q(\g)}$ of $U_q(\g)$ with respect to the weak topology generated by all matrix elements of finite dimensional representations. This section includes two equivalent explicit definitions of $\widetilde{U_q(\g)}$ (Definition \ref{hatdef} and Corollary \ref{hatdef2}), as well as some basic results about its structure. Most importantly, we show that $\widetilde{U_q(\g)}$ is isomorphic to the direct product of the endomorphism rings of all $V_\lambda$. Thus an element of $\widetilde{U_q(\g)}$ is equivalent to a choice of $x \in \End ({V_\lambda})$ for each $\lambda \in P_+$. 

\begin{Definition} \label{hatdef} Let $R$ be the ring consisting of series $\sum_{k=1}^\infty X_k$, where each $X_k \in U_q(\g)$ and, for any fixed $\lambda$, $X_k \cdot V_\lambda = 0$ for all but finitely many $k$. Notice that there is a well defined action of $R$ on any $V_\lambda$. Let $I$ be the two sided ideal in  $R$ consisting of elements which act as zero on all $V_\lambda$. Then  $\widetilde{U_q(\g)}$ is defined to be $R/I$.
\end{Definition}

\begin{Comment}
This is equivalent to the completion with respect to the topology mentioned above, since $U_q(\g)$ is semi simple, so the set of matrix elements of finite dimensional representations is point-separating for $U_q(\g)$. In particular the natural map of $U_q(\g)$ to $\widetilde{U_q(\g)}$ is an embedding. 
\end{Comment}

This completion has a simple description as follows:

\begin{Theorem} \label{comp}
$\widetilde{U_q(\g)}$ is isomorphic as an algebra to $\displaystyle \prod_{\lambda \in P_+} \End_{\bc(q)} (V_\lambda)$.
\end{Theorem}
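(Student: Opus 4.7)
The plan is to use the obvious evaluation map
\[
\Phi : \widetilde{U_q(\g)} \longrightarrow \prod_{\lambda \in P_+} \End_{\bc(q)}(V_\lambda), \qquad \bigl[\textstyle\sum_k X_k\bigr] \longmapsto \bigl(\sum_k X_k|_{V_\lambda}\bigr)_{\lambda},
\]
which is well defined because the defining condition on $R$ makes each coordinate a finite sum, and is clearly a $\bc(q)$-algebra homomorphism. Its kernel is by construction exactly the image of $I$ in $R/I$, so injectivity is immediate, and all the content is surjectivity.

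The main input I would use is the following density statement: for any finite set $\{\mu_1, \dots, \mu_n\} \subset P_+$ of distinct dominant weights, the natural map
\[
U_q(\g) \longrightarrow \bigoplus_{i=1}^{n} \End_{\bc(q)}(V_{\mu_i})
\]
is surjective. This follows from the Jacobson density theorem once one knows that each $V_{\mu_i}$ is absolutely irreducible over $\bc(q)$ and that the $V_{\mu_i}$ are pairwise non-isomorphic; the first holds because $V_{\mu_i}$ has a one-dimensional highest weight space and is generated by it (so Schur gives $\End_{U_q(\g)}(V_{\mu_i}) = \bc(q)$), and the second is by distinctness of highest weights.

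Given this, I would build a preimage of an arbitrary $(x_\lambda) \in \prod_\lambda \End_{\bc(q)}(V_\lambda)$ by a diagonal/telescoping construction. Enumerate $P_+ = \{\lambda_1, \lambda_2, \ldots\}$ (countability is clear) and inductively pick $Y_n \in U_q(\g)$ such that
\[
Y_n|_{V_{\lambda_k}} = 0 \ \text{ for } k < n, \qquad Y_n|_{V_{\lambda_n}} = x_{\lambda_n} - \sum_{k<n} Y_k|_{V_{\lambda_n}};
\]
the density statement applied to $\{\lambda_1, \dots, \lambda_n\}$ makes this possible. The series $\sum_n Y_n$ lies in $R$ since for any fixed $\lambda_m$ only the first $m$ terms can act nontrivially on $V_{\lambda_m}$, and telescoping shows $\Phi(\sum_n Y_n) = (x_\lambda)$.

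The only real obstacle is the density statement, and it is entirely standard once absolute irreducibility of the $V_\lambda$ is in hand; the rest is a bookkeeping induction. Note that this also gives the "equivalent description" promised in the section introduction: an element of $\widetilde{U_q(\g)}$ is no more and no less than a compatible choice of an endomorphism of each $V_\lambda$.
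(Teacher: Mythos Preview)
Your argument is correct. The evaluation map is well defined and injective by construction, Jacobson density applies exactly as you say (the $V_\lambda$ are pairwise non-isomorphic and have endomorphism ring $\bc(q)$ by the one-dimensional highest weight space), and the telescoping construction produces an honest element of $R$ hitting any prescribed $(x_\lambda)$.

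The paper takes a more hands-on route. Rather than invoking Jacobson density, it builds an explicit element $p_\lambda \in U_q(\g)$ (Lemma~\ref{isp}) from divided powers $E_i^{(\langle\lambda,\alpha_i^\vee\rangle)}F_i^{(\langle\lambda,\alpha_i^\vee\rangle)}$ and a lowest-to-highest weight transition, which fixes $v_\lambda$ and kills every $V_\mu$ with $\langle\mu,\rho^\vee\rangle \le \langle\lambda,\rho^\vee\rangle$, $\mu\ne\lambda$. Combined with a direct PBW argument showing $U_q(\g)/I_\lambda \cong \End_{\bc(q)}(V_\lambda)$ (Lemma~\ref{isend}), this yields for each $\lambda$ an element realising any desired endomorphism of $V_\lambda$ while annihilating all $V_\mu$ of smaller or equal height; the height grading $\langle\cdot,\rho^\vee\rangle$ then replaces your arbitrary enumeration and makes the finiteness of the sum automatic. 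Your approach is cleaner and more conceptual, and bypasses the PBW bookkeeping entirely; the paper's approach is more explicit and in particular writes down the projector $p_\lambda$ concretely, which has some independent interest (and is reused in the proof of Corollary~\ref{hatdef2}).
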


Before proving Theorem \ref{comp} we will need two technical lemmas.

\begin{Lemma} \label{isp}
There is an element $p_\lambda \in U_q(\g)$ such that 
\begin{enumerate}
\item $p_\lambda (v_\lambda) = v_\lambda$

\item For any $\mu \neq \lambda$, $p_\lambda$ sends the $\mu$ weight space of $V_\lambda$ to 0.

\item $p_\lambda V_\mu = 0$ unless $\langle \mu-\lambda, \rho^\vee \rangle > 0$ or $\mu = \lambda$.
\end{enumerate}
\end{Lemma}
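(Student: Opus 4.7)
My plan is to take $p_\lambda$ inside the commutative Cartan subalgebra $U_q(\h)\subset U_q(\g)$ generated by the $K_H$'s, and construct it by multivariable Lagrange interpolation in the joint $K$-eigenvalues. The key input is that the set of ``problematic'' dominant weights, where property (iii) actually forces $p_\lambda V_\mu = 0$, is finite, and that $\lambda$ fails to appear as a weight of any $V_\mu$ in this finite set.

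More precisely, let $S=\{\mu\in P_+ : \langle\mu-\lambda,\rho^\vee\rangle\le 0 \text{ and } \mu\neq\lambda\}$. This set is finite because $\langle\cdot,\rho^\vee\rangle$ pairs positively with each fundamental weight, so it is a positive linear functional on the dominant cone. The main observation is that for $\mu\in S$, the irreducible $V_\mu$ does not contain $\lambda$ as a weight: any weight $\nu$ of $V_\mu$ has the form $\mu-\sum n_i \alpha_i$ with $n_i\ge 0$, and since $\langle\alpha_i,\rho^\vee\rangle=1$, the equality $\nu=\lambda$ would force $\langle\mu,\rho^\vee\rangle\ge\langle\lambda,\rho^\vee\rangle$ as well as the reverse inequality, hence $\mu=\lambda$, contradicting $\mu\in S$.

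Next, let $\Omega$ be the finite union of the weight sets of the representations $V_\mu$ for $\mu\in S\cup\{\lambda\}$. Each $K_H$ acts on a weight $\nu$ by the scalar $q^{\langle\nu,H\rangle}$, and as $H$ ranges over the coweight lattice these joint eigenvalues separate any two distinct weights in $\Omega$. A standard Lagrange-interpolation argument in the Laurent-polynomial algebra $U_q(\h)$ then produces $p_\lambda\in U_q(\h)$ acting as $1$ on the weight $\lambda$ and as $0$ on every other weight appearing in $\Omega$.

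To conclude I would verify the three properties. Properties (i) and (ii) are immediate since every weight of $V_\lambda$ lies in $\Omega$. For (iii): when $\mu\in S$, the key observation places every weight of $V_\mu$ in $\Omega\setminus\{\lambda\}$, so $p_\lambda V_\mu=0$; when $\langle\mu-\lambda,\rho^\vee\rangle>0$ or $\mu=\lambda$, property (iii) imposes nothing. The only substantive step is the observation that $\lambda$ is not a weight of any $V_\mu$ with $\mu\in S$; once that is in hand, the remainder is pure bookkeeping, and I do not anticipate a genuine obstacle.
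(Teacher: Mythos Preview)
Your argument is correct, and it is genuinely different from the paper's. The paper constructs $p_\lambda$ using the nilpotent parts of $U_q(\g)$: it first takes $p'=EF$ with $F\in U_q^-(\g)$ sending $v_\lambda$ to the lowest weight vector $v_\lambda^{\mathrm{low}}$ and $E\in U_q^+(\g)$ sending $v_\lambda^{\mathrm{low}}$ back to $v_\lambda$, and then left-multiplies by $\prod_i E_i^{(\langle\lambda,\alpha_i^\vee\rangle)}F_i^{(\langle\lambda,\alpha_i^\vee\rangle)}$ to force the vanishing in (iii). You instead stay entirely inside the commutative Cartan part and build $p_\lambda$ by Lagrange interpolation in the $K_i$'s over the finite set $\Omega$ of relevant weights, after checking that $\lambda$ is not a weight of any $V_\mu$ with $\mu\in S$.

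Your route is arguably cleaner: it avoids any appeal to PBW or to the structure of highest/lowest weight vectors, and every step is transparent. The paper's construction, on the other hand, is more explicit and uniform (no case-by-case interpolation), and it reuses objects such as $v_\lambda^{\mathrm{low}}$ that play a role later in the paper. One small remark: for the interpolation step it suffices to use the $K_i=K_{H_i}$, which act on weight $\nu$ by $q^{\langle\nu,\alpha_i^\vee\rangle}\in\bc(q)$ and already separate all weights; phrasing it this way sidesteps any worry about whether $\langle\nu,H\rangle$ is integral for a general coweight $H$.
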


\begin{proof}
Fix a lowest weight vector $v_\lambda^{\text{low}} \in V_\lambda$.
$V_\lambda$ is a quotient of $U_q^-(\g) \cdot v_\lambda$, so we can choose some $F \in U_q^-(\g)$ such that $Fv_\lambda = v_\lambda^{\text{low}}$. Similarly, we can choose some $E \in U^+_q(\g)$ such that $E v_\lambda^{\text{low}} = v_\lambda$. Then $p':= EF$ clearly satisfies the first two conditions.

For each $i \in I$, let $R_i = E_i^{( \langle \lambda, \alpha_i^\vee \rangle )} F_i^{( \langle \lambda, \alpha_i^\vee \rangle )}$. Let
\begin{equation*}
p_\lambda= \left(\prod_{i \in I} R_i\right) p',
\end{equation*}
where the product is taken in any order. It is straightforward to see that this element satisfies the desired properties.
\end{proof}

\begin{Lemma} \label{isend}
Let $I_\lambda$ be the kernel of the action of $U_q(\g)$ on $V_\lambda$. Then
$U_q(\g) /I_\lambda $ is isomorphic to $\End_{\bc (q) } V_\lambda$.
\end{Lemma}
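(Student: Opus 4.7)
The natural action gives an algebra map $\rho\colon U_q(\g) \to \End_{\bc(q)}(V_\lambda)$, and by the very definition of $I_\lambda$ this descends to an \emph{injection} $\bar\rho\colon U_q(\g)/I_\lambda \hookrightarrow \End_{\bc(q)}(V_\lambda)$. Thus the substance of the lemma is surjectivity, and the clean way to get this is the Jacobson density theorem (a.k.a.\ Burnside): if $V$ is a simple finite-dimensional module over an algebra $A$ with $\End_A(V)$ equal to the ground field $k$, then the structure map $A \to \End_k(V)$ is surjective. So the plan reduces to two pieces: (a) $V_\lambda$ is a simple $U_q(\g)$-module (standard, and already used implicitly throughout the paper), and (b) $\End_{U_q(\g)}(V_\lambda) = \bc(q) \cdot \Id$.

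For (b), I would argue as follows. Any $\phi \in \End_{U_q(\g)}(V_\lambda)$ commutes with each $K_H$, hence preserves weight spaces. The highest weight space is one-dimensional (spanned by $v_\lambda$), so $\phi(v_\lambda) = c \cdot v_\lambda$ for some $c \in \bc(q)$. Because $V_\lambda = U_q^-(\g) \cdot v_\lambda$ is generated by its highest weight vector, and $\phi$ is $U_q(\g)$-linear, this forces $\phi = c \cdot \Id$.

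Combining (a) and (b) and applying Burnside gives surjectivity of $\bar\rho$, which together with injectivity yields the desired isomorphism $U_q(\g)/I_\lambda \cong \End_{\bc(q)}(V_\lambda)$.

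There is no real obstacle; the only thing to be careful about is that $\bc(q)$ is not algebraically closed, so one genuinely needs to establish absolute irreducibility (i.e., step (b)) rather than invoking a form of Burnside that assumes an algebraically closed base field. As an alternative avoiding Burnside entirely, one could give a hands-on argument using the element $p_\lambda$ from Lemma \ref{isp}: $p_\lambda$ acts on $V_\lambda$ as projection onto $\bc(q)\, v_\lambda$, and conjugating $p_\lambda$ by appropriate elements of $U_q^{\pm}(\g)$ (which can carry $v_\lambda$ to any chosen basis vector and vice versa) produces a full set of matrix units in $\End_{\bc(q)}(V_\lambda)$, showing the image of $\rho$ spans. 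I would probably mention this alternative, but present the Burnside argument as the main proof since it is shorter and cleaner.
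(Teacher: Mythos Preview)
Your proof via Jacobson density is correct, and you were right to single out step (b): since $\bc(q)$ is not algebraically closed, absolute irreducibility of $V_\lambda$ is the genuine content, and your argument for it (one-dimensional top weight space plus cyclicity of $v_\lambda$) is the standard and correct one.

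The paper does \emph{not} use Burnside.  Instead it gives exactly the hands-on construction you sketched as an alternative: starting from the element $p_\lambda$ of Lemma~\ref{isp}, it takes a $d$-dimensional space $\mathcal{F}\subset U_q^+(\g)$ (from the PBW basis) with $p_\lambda\mathcal{F}$ still $d$-dimensional, then left-multiplies by PBW monomials in $U_q^-(\g)$ to produce a $d^2$-dimensional subspace of $U_q(\g)$ mapping injectively into $\End_{\bc(q)}(V_\lambda)$.  In effect this builds matrix units of the form $F_\beta\, p_\lambda\, E_\alpha$, which is your ``conjugate $p_\lambda$ by appropriate elements of $U_q^\pm(\g)$'' idea.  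Your Burnside route is shorter and more conceptual; the paper's route has the virtue of being self-contained and of yielding explicit elements in $U_q(\g)$ realizing prescribed endomorphisms, which is in the spirit of how the lemma is subsequently used in the proof of Theorem~\ref{comp}.
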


\begin{proof}
Let $d = \dim (V_\lambda)$.
Using the PBW basis in the $E$s, there is a $d$ dimensional subspace $\mathcal{F}$ of elements in $U_q^+(\g)$ that act non-trivially on $V_\lambda,$ and in fact such that  $p_\lambda \mathcal{F}$ is still $d$ dimensional, where  $p_\lambda$ is as in Lemma \ref{isp}. One can tensor this space with the PBW operators from $U_q^-(\g)$ to get a $d^2$ dimensional subspace of $U_q(\g)$ that acts non-trivially on $V_\lambda$. The result follows.
\end{proof}

\begin{proof}[{\bf Proof of Theorem \ref{comp}}]
Using Lemmas \ref{isp} and \ref{isend}, we can realize any endomorphism of $V_\lambda$ using an element of $U_q(\g)$ that kills $V_\mu$ unless $\langle \mu-\lambda, \rho^\vee \rangle >0$, or $\mu = \lambda$. The result follows.
\end{proof}

We include the following result to show how our definition of $\widetilde{U_q(\g)}$ relates to other completions that appear in the literature. This could also be taken as the definition of $\widetilde{U_q(\g)}$.

\begin{Corollary} \label{hatdef2}
Let each $\lambda \in P_+$, let $I_\lambda$ be the two sided ideal of $U_q(\g)$ generated by all $E_i^{\langle \lambda, \alpha_i^\vee \rangle}$ and $F_i^{\langle \lambda, \alpha_i^\vee \rangle}$. Let $\displaystyle U_q''(\g)= \lim_{\leftarrow} U_q(\g)/I_\lambda$, using the partial order on weights where $\mu \leq \lambda$ if and only if $\lambda - \mu \in P_+$. $U_q''(\g)$ acts in a well defined way on any finite dimensional module, so there is a map $U_q''(\g) \rightarrow \widetilde{U_q(\g)}$. This is an isomorphism.
\end{Corollary}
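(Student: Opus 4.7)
The approach is to construct the map $\phi : U_q''(\g) \to \widetilde{U_q(\g)}$ and verify it is a bijection by exploiting the identification $\widetilde{U_q(\g)} \cong \prod_{\mu \in P_+} \End_{\bc(q)}(V_\mu)$ from Theorem \ref{comp}.  The argument rests on two preliminary observations.  First, since $E_i$ and $F_i$ act nilpotently on every $V_\mu$, for each $\mu \in P_+$ there exists $\lambda_\mu \in P_+$ such that $I_\lambda V_\mu = 0$ whenever $\lambda \geq \lambda_\mu$; this makes $V_\mu$ into a module over $U_q(\g)/I_\lambda$ for all such $\lambda$.  Second, for each fixed $\lambda$ the set $S_\lambda := \{\mu \in P_+ : I_\lambda V_\mu = 0\}$ is finite, because $\mu \in S_\lambda$ forces $F_i^{\langle \lambda, \alpha_i^\vee \rangle} v_\mu = 0$, and the highest weight vector $v_\mu$ already generates an $\mathfrak{sl}_2^{(i)}$-string of length $\langle \mu, \alpha_i^\vee \rangle + 1$; hence $\langle \lambda, \alpha_i^\vee \rangle > \langle \mu, \alpha_i^\vee \rangle$ for every $i$, confining $\mu$ to a finite set of dominant weights.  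Moreover, $\bigcup_\lambda S_\lambda = P_+$.

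The map $\phi$ is then given by sending $(u_\lambda) \in U_q''(\g)$ to the element of $\prod_\mu \End(V_\mu)$ whose $\mu$-component is the action of $u_\lambda$ on $V_\mu$ for any $\lambda \geq \lambda_\mu$; compatibility of the inverse system ensures this is independent of the choice of $\lambda$.  To verify $\phi$ is an isomorphism, I would show that for each $\lambda$ the induced map $\pi_\lambda : U_q(\g)/I_\lambda \to \prod_{\mu \in S_\lambda} \End(V_\mu)$ is itself an isomorphism, and then pass to the inverse limit to obtain $U_q''(\g) \cong \prod_\mu \End(V_\mu) \cong \widetilde{U_q(\g)}$.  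Surjectivity of $\pi_\lambda$ is immediate from the proof of Theorem \ref{comp}: since $S_\lambda$ is finite, Lemmas \ref{isp} and \ref{isend} combine to realize any prescribed tuple of endomorphisms by an element of $U_q(\g)$.

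The main obstacle will be the injectivity of $\pi_\lambda$, namely the claim that any $u \in U_q(\g)$ annihilating every $V_\mu$ with $\mu \in S_\lambda$ already lies in $I_\lambda$.  I expect to establish this by proving that $U_q(\g)/I_\lambda$ is finite-dimensional: using the triangular decomposition and PBW, the exponents of $E_i$ and $F_i$ in any monomial are bounded modulo $I_\lambda$, and expanding the commutators $[E_i^{\langle \lambda, \alpha_i^\vee \rangle}, F_i^{\langle \lambda, \alpha_i^\vee \rangle}]$ produces explicit polynomial identities for the $K_i$ in the quotient, bounding the Cartan part as well.  Once $U_q(\g)/I_\lambda$ is known to be finite-dimensional, the semisimplicity of finite-dimensional $U_q(\g)$-modules (which pass to $U_q(\g)/I_\lambda$-modules) together with Artin--Wedderburn deliver $\pi_\lambda$ as the desired isomorphism.
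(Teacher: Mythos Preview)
The paper's own proof is two sentences: surjectivity by the argument of Theorem~\ref{comp}, and injectivity ``by the definition of $U_q''(\g)$''.  Your route---proving that each levelwise map $\pi_\lambda : U_q(\g)/I_\lambda \to \prod_{\mu\in S_\lambda}\End(V_\mu)$ is already an isomorphism and then passing to the limit---is a genuinely different and more structural strategy, but it runs into two real obstacles.

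First, your finite-dimensionality sketch is incomplete: the PBW basis of $U_q^{\pm}(\g)$ is built from root vectors $E_\beta$ for \emph{all} positive roots $\beta$, and the relations $E_i^{n_i}=0$ for the simple generators do not by themselves bound the exponents of the non-simple $E_\beta$ in a PBW monomial.  Showing that these relations propagate to force every $E_\beta$ nilpotent in the quotient is a nontrivial fact (of the same flavour as what underlies Lusztig's small quantum group) which your outline does not supply.  Second, and more damaging to the strategy, even granting that $U_q(\g)/I_\lambda$ is finite-dimensional and semisimple, Artin--Wedderburn identifies it with the product of $\End(S)$ over \emph{all} its simple modules $S$.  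Since $I_\lambda$ is generated only by powers of $E_i$ and $F_i$, any twist $V_\mu^\sigma$ of $V_\mu$ by a sign character $\sigma$ of the Cartan part is equally annihilated by $I_\lambda$ and hence appears as an additional simple factor.  These extra factors give $\pi_\lambda$ a nonzero kernel, so the levelwise-isomorphism approach cannot succeed as stated; one is forced instead to argue directly that the inverse limit of these kernels vanishes, which is a different question and does not require finite-dimensionality of $U_q(\g)/I_\lambda$ at all.
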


\begin{proof} \label{whdef2}
The same argument as we used to prove Theorem \ref{comp} shows that the image is $\displaystyle \prod_{\lambda \in P_+} \End(V_\lambda)$, which is all of $ \widetilde{U_q(\g)}$. The map is injective by the definition of $U''_q(\g)$.
\end{proof}

\begin{Comment}
 The completion $\widetilde{U_q(\g)}$ is related to the algebra $\dot{U}$ from \cite[Chapter 23]{L} as follows. $\dot{U}$ acts in a well defined way on each irreducible representation $V_\lambda$, and no non-zero element of $\dot{U}$ acts as zero on every $V_\lambda$. Hence $\dot{U}$ naturally embeds in $\widetilde{U_q(\g)}$. There is a canonical basis $\dot{B}$ for $\dot{U}$. All but finitely many elements of $\dot{B}$ act as zero on any given $V_\lambda$ (see \cite{L} Remark 25.2.4 and Section 23.1.2), so the space of all formal (infinite) linear combinations of elements of $\dot{B}$ also maps to $\widetilde{U_q(\g)}$. This map is bijective, and so $\widetilde{U_q(\g)}$ is naturally identified with the space of all formal linear combinations of elements of $\dot{B}$.
 \end{Comment}

It is clear the  $\widetilde{U_q(\g)}$ has the structure of a ring, and that it acts in a well defined way on finite representations. It also has a well defined topological coalgebra structure, with the coproduct of $u$ defined by the action of an element $u$ on a tensor product $V \otimes W$. This is only a topological coproduct because it maps $ \widetilde{U_q(\g)}$ into $\displaystyle \prod_{\lambda, \mu} \End_{\bc(q)} V_\lambda \otimes \End_{\bc(q)} V_\mu$, which can be though of as a completion of  $\displaystyle \prod_{\lambda} \End_{\bc(q)} V_\lambda \otimes  \prod_{\mu}  \End_{\bc(q)} V_\mu$. The restriction of this coproduct to $U_q(\g)$ agrees with the normal coproduct so, since $U_q(\g)$ is a dense subalgebra of $\widetilde{U_q(\g)}$, we see that $\widetilde{U_q(\g)}$ is a topological Hopf algebra. 

We will need to consider the group of invertible elements of $\widetilde{U_q(\g)}$ acting on $\widetilde{U_q(\g)}$ by conjugation. This action preserves the algebra structure of $\widetilde{U_q(\g)}$, but does not preserve the coproduct.

\begin{Definition} \label{Aactdef}
Let $X$ be an invertible element in $\widetilde{U_q(\g)}$. Define $C_X$ (conjugation by $ X $) to be the algebra automorphism of $\widetilde{U_q(\g)}$ defined by $u \rightarrow X u X^{-1}$.
\end{Definition}

\begin{Comment}
We caution the reader that $C_X$ is not that Hopf theoretic adjoint action of $X$, as defined in, for example, \cite{CP}. 
\end{Comment}

\begin{Comment} \label{thediagram}
For any invertible $X \in \widetilde{U_q(\g)},$ the action of $X $  on representations is compatible with the automorphism $ C_X $ in the sense that, for any representation $V$, the following diagram commutes:
\begin{equation*}
\xymatrix{
V  \ar@(dl,dr) \ar@/ /[rrr]^{X} &&& V \ar@(dl,dr) \\
\widetilde{U_q(\g)}  \ar@/ /[rrr]^{C_X} &&& \widetilde{U_q(\g)}. \\
}
\end{equation*} 
In general, $C_X$ does not preserve the subalgebra $U_q(\g)$ of $\widetilde{U_q(\g)}$, although it does in all cases we consider here.   
\end{Comment}

\subsection{Coalgebra antiautomorphisms}
We will be particularly interested in the case where $C_X $ is a coalgebra antiautomorphism. Explicitly, this means that $C_X$ satisfies the equation 
\begin{equation} \label{coanti}
\Delta^{\text{op}}(C_X(u)) = C_X \otimes C_X \big( \Delta(u) \big), \ \text{ for all } u \in U_q(\g).
\end{equation} 
Such $X$ are important because of the following result, which follows immediately from Comment \ref{thediagram} and the fact that $U_q(\g)$ is semi-simple.
\begin{Proposition} \label{comcon}
$C_X$ is a coalgebra antiautomorphism if and only if the map
\begin{equation}
\begin{aligned}
V\otimes W &\rightarrow W \otimes V \\
v \otimes w &\mapsto \text{Flip} \circ (X^{-1} \otimes X^{-1})  \Delta(X) v \otimes w
\end{aligned}
\end{equation}
is an isomorphism of $U_q(\g) $ representations for all $V$ and $W$. \qed
\end{Proposition}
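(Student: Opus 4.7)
The plan is to directly compute the condition for $\sigma_{V,W}(v\otimes w) := \Flip\circ (X^{-1}\otimes X^{-1})\Delta(X)(v\otimes w)$ to be $U_q(\g)$-equivariant, and to rearrange that condition into equation~(\ref{coanti}). The argument consists of an operator computation on a fixed $V\otimes W$ followed by a brief semisimplicity argument to promote operator identities to the algebraic identity~(\ref{coanti}).

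First simplify $\sigma_{V,W}$. Since $X^{-1}\otimes X^{-1}$ acts diagonally it commutes with $\Flip$, and the standard identity $\Flip\circ\Delta(X) = \Delta^{\text{op}}(X)\circ\Flip$ on any tensor product gives $\sigma_{V,W} = (X^{-1}\otimes X^{-1})\circ\Delta^{\text{op}}(X)\circ\Flip$. The intertwining condition $\Delta(u)\circ\sigma_{V,W} = \sigma_{V,W}\circ\Delta(u)$, combined with the analogous identity $\Flip\circ\Delta(u) = \Delta^{\text{op}}(u)\circ\Flip$ and cancellation of the common $\Flip$ on the right, reduces to
\[\Delta(u)\,(X^{-1}\otimes X^{-1})\,\Delta^{\text{op}}(X) \;=\; (X^{-1}\otimes X^{-1})\,\Delta^{\text{op}}(X)\,\Delta^{\text{op}}(u).\]
Multiplying on the left by $X\otimes X$ and recognising $(X\otimes X)\Delta(u)(X^{-1}\otimes X^{-1}) = (C_X\otimes C_X)(\Delta(u))$---the content of Comment~\ref{thediagram} applied in each tensor factor---turns this into $(C_X\otimes C_X)(\Delta(u))\,\Delta^{\text{op}}(X) = \Delta^{\text{op}}(X)\,\Delta^{\text{op}}(u)$. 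Since $\Delta^{\text{op}}$ is an algebra homomorphism, the right-hand side equals $\Delta^{\text{op}}(XuX^{-1})\,\Delta^{\text{op}}(X) = \Delta^{\text{op}}(C_X(u))\,\Delta^{\text{op}}(X)$, and cancelling $\Delta^{\text{op}}(X)$ yields $(C_X\otimes C_X)(\Delta(u)) = \Delta^{\text{op}}(C_X(u))$, which is precisely~(\ref{coanti}).

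Every manipulation above is a reversible identity of operators on a fixed $V\otimes W$, so it remains to pass from ``the intertwining condition holds on every $V\otimes W$'' to the algebraic identity~(\ref{coanti}) in $\widetilde{U_q(\g)}\otimes\widetilde{U_q(\g)}$. By semisimplicity every finite-dimensional $U_q(\g)$-module appears as a summand of some $V_\lambda\otimes V_\mu$, and by Theorem~\ref{comp} an element of $\widetilde{U_q(\g)}$ is determined by its action on the $V_\lambda$; together these show that operator equality on every $V\otimes W$ is equivalent to the algebraic identity. The only real obstacle is bookkeeping---tracking $X$ versus $X^{-1}$ and $\Delta$ versus $\Delta^{\text{op}}$ as everything moves through $\Flip$---consistent with the authors' remark that the proposition follows immediately from Comment~\ref{thediagram} together with semisimplicity.
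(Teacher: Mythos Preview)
Your proof is correct and is precisely the unpacking the paper omits: the authors give no detailed argument, only the remark that the proposition ``follows immediately from Comment~\ref{thediagram} and the fact that $U_q(\g)$ is semi-simple,'' and your computation makes this explicit using exactly those two ingredients. One small sharpening: in the last paragraph, the passage from the operator identity on every $V_\lambda\otimes V_\mu$ to the algebraic identity~(\ref{coanti}) really uses the definition of $\widetilde{U_q(\g)\otimes U_q(\g)}$ as the completion with respect to matrix elements of such tensor products (so an element acting as zero on each $V_\lambda\otimes V_\mu$ is zero), rather than Theorem~\ref{comp} for $\widetilde{U_q(\g)}$ itself.
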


\section{Coboundary categories and the unitarized $R$-matrix} \label{pre2}
We now briefly review the universal $R$ matrix, and the corresponding braiding on the category of $U_q(\g)$ representations. We then give Drinfeld's definition of a coboundary category, and review his unitarization procedure whereby the universal $R$ matrix is modified, resulting in a coboundary structure on the category of $U_q(\g)$ representations.

\subsection{The R-matrix}
\begin{Definition} 
A braided monoidal category is a monoidal category $\C$, along with a natural isomorphism $\sigma^{br}_{V,W}: V \otimes W \rightarrow W \otimes V$ for each pair $V,W \in \C$, such that for any $U,V, W \in \C$, 
\begin{equation*} (\sigma^{br}_{U,W} \otimes \Id) \circ (\Id \otimes \sigma^{br}_{V,W}) = \sigma^{br}_{U \otimes V, W} \end{equation*}
\begin{equation*} (\Id \otimes \sigma^{br}_{U,W}) \circ (\sigma^{br}_{U,V} \otimes  \Id) = \sigma^{br}_{U,V\otimes W}. \end{equation*}
The system $\sigma^{br} : = \{ \sigma^{br}_{V,W} \}$ is called a braiding on $\C$.
\end{Definition}

We will use the term braiding for such a $ \sigma^{br} $ and use the term commutativity constraint for any natural isomorphism $ V \otimes W \rightarrow W \otimes V $. 

Let $\widetilde{U_q(\g) \otimes U_q(\g)}$ be the completion of $U_q(\g) \otimes U_q(\g)$ in the weak topology defined by all matrix elements of representations $V_\lambda \otimes V_\mu$.

\begin{Definition} \label{rR}
A universal $R$-matrix is an element $R$ of $\widetilde{U_q(\g) \otimes U_q(\g)}$ such that $ \sigma^{br}_{V,W} := \mbox{Flip} \circ R$ gives a braided structure to the monoidal category of $ U_q(\g) $ representations.
\end{Definition}

\begin{Comment}
The universal $R$ matrix is not truly unique. However, it exists, and there is a well studied standard choice.  We will use a result of Kirillov-Reshetikhin and Levendorskii-Soibelman (see Theorem \ref{sR}) which describes this standard R-matrix in terms of $T_{w_0} $.
\end{Comment}

\subsection{Coboundary categories}
An analogous notion to braided monoidal categories is that of a coboundary monoidal category, due to Drinfeld  \cite[Section 3]{D}.

\begin{Definition} \label{cobdef}
A coboundary monoidal category is a monoidal category $\C$, along with a natural isomorphism $\sigma_{V,W}: V \otimes W \rightarrow W \otimes V$ for each pair $V,W \in \C$, satisfying
\begin{enumerate}

\item \label{commutor1} $\sigma_{W,V} \circ \sigma_{V,W} = \Id.$

\item \label{commutor2} For all $U,V,W \in \C$, the following diagram commutes:
\begin{equation*} \label{comp_diag}
\xymatrix{
U \otimes V \otimes W \ar@/ /[rr]^ {\hspace{-8pt} \sigma_{U,V} \otimes \Id} 
\ar@/ /[d]^{\Id \otimes \sigma_{V,W}} && V \otimes U \otimes W  \ar@/ /[d]^{\sigma_{V \otimes U, W}}\\
U \otimes W \otimes V 
\ar@/ /[rr]^ {\hspace{-8pt} \sigma_{U, W \otimes V}} && W \otimes V \otimes U.
 }
\end{equation*}

\end{enumerate}
\end{Definition}

Following \cite[Section 3]{cactus}, we call (\ref{commutor1}) the symmetry axiom and (\ref{commutor2}) the cactus axiom.  We will use the term commutor for a $ \sigma $ that satisfies these two conditions.

Though the braiding $ \sigma^{br} $ is better known, the category of $ U_q(\g) $ modules also has a natural commutor $ \sigma^{dr} $, which is our main object of study.  We now review its construction following Drinfeld \cite[Section 3]{D} and Berenstein-Zwicknagl \cite[Section 1]{BZ}.

\subsection{The unitarized R-matrix} \label{R}
Consider the ``ribbon" or ``quantum Casimir element'' $ Q \in \widetilde{U_q(\g)} $, which acts  on the irreducible representation $ V_\lambda$, as multiplication by $ q^{(\lambda, \lambda + 2 \rho)} $ (see for example \cite{BK}). In fact, $Q$ can act by fractional powers of $q$, so to be precise, we should adjoin a fixed $k^{th}$ root of $q$ to our base field $\bc(q)$, where $k$ is twice the dual Coxeter number for $\g$. 

\begin{Proposition}[see \cite{BK}, Section 2.2] \label{ror}
$R^{op} R = Q^{-1} \otimes Q^{-1} \Delta(Q).$
\end{Proposition}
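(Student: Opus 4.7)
The strategy is to show that both sides are central elements in the action on any tensor product of irreducibles, and then to verify they act by the same scalar on each irreducible summand.

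For centrality of the right-hand side: since $Q$ acts as the scalar $q^{(\lambda, \lambda + 2\rho)}$ on each $V_\lambda$, Theorem~\ref{comp} shows $Q$ lies in the center of $\widetilde{U_q(\g)}$, so both $Q^{-1} \otimes Q^{-1}$ and $\Delta(Q)$ commute with $\Delta(U_q(\g))$ on every tensor product. For the left-hand side, the defining property $\Delta^{\text{op}}(x) R = R \Delta(x)$ of the universal $R$-matrix yields, upon applying the flip (which is an algebra homomorphism of $\widetilde{U_q(\g) \otimes U_q(\g)}$), the identity $\Delta(x) R^{\text{op}} = R^{\text{op}} \Delta^{\text{op}}(x)$. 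Composing,
$$(R^{\text{op}} R)\, \Delta(x) = R^{\text{op}} \Delta^{\text{op}}(x) R = \Delta(x)\, (R^{\text{op}} R),$$
so $R^{\text{op}} R$ is central in its action on every $V_\lambda \otimes V_\mu$.

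By Schur's lemma, both sides act by scalars on each irreducible summand $V_\nu \subset V_\lambda \otimes V_\mu$, and the scalar from the right-hand side is immediately
$$q^{(\nu, \nu + 2\rho) - (\lambda, \lambda + 2\rho) - (\mu, \mu + 2\rho)}.$$
To compute the scalar of $R^{\text{op}} R$, I introduce the Drinfeld element $u := m \circ (S \otimes \Id)(R^{\text{op}}) \in \widetilde{U_q(\g)}$, and establish the classical identity
$$\Delta(u) = (R^{\text{op}} R)^{-1}\,(u \otimes u).$$
This is a direct manipulation using the quasi-triangularity axioms $(\Delta \otimes \Id)(R) = R_{13} R_{23}$ and $(\Id \otimes \Delta)(R) = R_{13} R_{12}$ together with the antipode relations. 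Next, a highest-weight computation (using that on $v_\lambda \otimes v_\mu$ the universal $R$-matrix reduces to its Cartan factor, since the root-vector pieces involve $E_i$ or $F_i$ annihilating a highest or lowest weight vector) shows that $uK_{-2\rho}$ acts on $V_\lambda$ by $q^{-(\lambda, \lambda + 2\rho)}$, so $uK_{-2\rho} = Q^{-1}$. Since $K_{2\rho}$ is group-like, $\Delta(uK_{-2\rho}) = \Delta(u)(K_{-2\rho} \otimes K_{-2\rho})$, and substituting yields $\Delta(Q^{-1}) = (R^{\text{op}} R)^{-1}(Q^{-1} \otimes Q^{-1})$, which rearranges to the claim.

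The main technical obstacle is the Drinfeld-element identity $\Delta(u) = (R^{\text{op}}R)^{-1}(u \otimes u)$: this is a classical computation valid in any quasi-triangular Hopf algebra, but it requires careful bookkeeping for the interplay between $R$, $R^{\text{op}}$, and $S$. The identification $uK_{-2\rho} = Q^{-1}$ is then a short weight computation. A full treatment is given in \cite{BK}, Section 2.2.
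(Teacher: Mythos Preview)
The paper does not supply its own proof of this proposition: it is stated with the citation ``see \cite{BK}, Section 2.2'' and no argument is given. Your proposal is correct and is essentially the standard proof one finds in that reference---centrality of both sides via the quasi-triangularity relation, the Drinfeld-element identity $\Delta(u) = (R^{\mathrm{op}}R)^{-1}(u \otimes u)$, and the identification of the ribbon element as $uK_{-2\rho}$---so there is nothing to compare beyond noting that you have reproduced what the paper defers to the literature.
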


The element $ Q $ is central, and admits a central square root, denoted  $ Q^{1/2} $, which  acts on $V_\lambda$ as multiplication by the constant $ q^{(\lambda, \lambda)/2 + (\lambda, \rho) }$.  $\bar{R}$  is defined as
\begin{equation*}
\bar{R} := R (R^{op}R)^{-1/2}.
\end{equation*}
Using Proposition \ref{ror} and the fact that $Q^{1/2}$ is central, this is equivalent to
\begin{equation} \label{eq:thetaR}
\bar{R} = R (Q^{1/2} \otimes Q^{1/2}) \Delta(Q^{-1/2}) = (Q^{1/2} \otimes Q^{1/2}) R \Delta(Q^{-1/2}),
\end{equation}

\begin{Definition}
Define the commutor for the category of $U_q(\g) $-modules by $ \sigma^{dr} := \text{Flip} \circ \bar{R} $.
\end{Definition}

The following is an easy consequence of the definitions.
\begin{Proposition}[{\cite[Proposition 3.3]{D}}]
$\sigma^{dr}$ is a coboundary structure on the category of $ U_q(\g) $ modules, ie. it satisfies the conditions of Definition \ref{cobdef}.
\end{Proposition}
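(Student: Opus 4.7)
My plan is to verify the two axioms (symmetry and cactus) directly, using the factorization $\bar R = RZ$ with $Z := (R^{op}R)^{-1/2} = (Q^{1/2} \otimes Q^{1/2})\,\Delta(Q^{-1/2})$, together with the hexagon and braid identities satisfied by the standard braiding $\sigma^{br} = \Flip \circ R$.

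For the symmetry axiom, I would first establish that $\Delta(Q) = \Delta^{op}(Q)$: because $\Delta(Q)$ acts as the scalar $q^{(\nu, \nu + 2\rho)}$ on each irreducible summand $V_\nu \subset V \otimes W$, it commutes with $R$, and then quasi-triangularity $\Delta^{op}(x) = R\,\Delta(x)\,R^{-1}$ forces the claimed equality. This gives $Z^{op} = Z$, and together with the centrality of $Q \otimes Q$ also shows that $Z$ commutes with $R$. Combining with Proposition~\ref{ror},
\begin{equation*}
\bar R^{op}\,\bar R \;=\; R^{op}\,Z^{op}\,R\,Z \;=\; R^{op} R \cdot Z^2 \;=\; (R^{op}R)(R^{op}R)^{-1} \;=\; 1,
\end{equation*}
which is exactly $\sigma^{dr}_{W,V} \circ \sigma^{dr}_{V,W} = \Id_{V \otimes W}$.

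For the cactus axiom, by semisimplicity and naturality it suffices to take $U = V_\alpha$, $V = V_\beta$, $W = V_\gamma$ irreducible. Both sides of the cactus diagram are $U_q(\g)$-module maps $V_\alpha \otimes V_\beta \otimes V_\gamma \to V_\gamma \otimes V_\beta \otimes V_\alpha$, so by Schur's lemma it is enough to check agreement on each isotypic $V_\nu$. Since $Z$ commutes with $\Delta(U_q(\g))$ and therefore acts by a scalar on every isotypic block, I can factor
\begin{equation*}
\sigma^{dr}_{V_\lambda, V_\mu}\big|_{V_\xi \subset V_\lambda \otimes V_\mu} \;=\; s(\lambda, \mu; \xi)\,\sigma^{br}_{V_\lambda, V_\mu}\big|_{V_\xi}, \qquad s(\lambda, \mu; \xi) := q^{\frac{1}{2}[(\lambda, \lambda + 2\rho) + (\mu, \mu + 2\rho) - (\xi, \xi + 2\rho)]}.
\end{equation*}
Tracing the LHS of the cactus through an intermediate $V_\mu \subset V_\alpha \otimes V_\beta$, the twisting scalar accumulated on $V_\nu \subset V_\mu \otimes V_\gamma$ is $s(\alpha, \beta; \mu)\,s(\mu, \gamma; \nu)$, in which the two $(\mu, \mu + 2\rho)$ terms cancel to leave $q^{\frac{1}{2}[(\alpha, \alpha + 2\rho) + (\beta, \beta + 2\rho) + (\gamma, \gamma + 2\rho) - (\nu, \nu + 2\rho)]}$, \emph{independent of $\mu$}. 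The RHS produces exactly the same totally-symmetric scalar via intermediates $V_{\mu'} \subset V_\beta \otimes V_\gamma$. The remaining $\sigma^{br}$-parts of the two sides agree by the braid relation for $\sigma^{br}$, obtained by expanding each outer $\sigma^{br}_{V \otimes U, W}$ and $\sigma^{br}_{U, W \otimes V}$ via the hexagons.

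The main obstacle is the cactus axiom, and its crux is precisely the $\mu$-independent telescoping of $s(\alpha, \beta; \mu)\,s(\mu, \gamma; \nu)$. This is exactly why the choice $(R^{op}R)^{-1/2}$ yields a coboundary structure rather than merely a braided one: any other scalar correction of $R$ would spoil this telescoping and fail the cactus identity.
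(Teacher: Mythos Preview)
Your cactus argument is correct and quite clean: the telescoping of the ribbon scalars $s(\alpha,\beta;\mu)\,s(\mu,\gamma;\nu)$ into the symmetric, $\mu$-independent quantity $q^{\frac12[(\alpha,\alpha+2\rho)+(\beta,\beta+2\rho)+(\gamma,\gamma+2\rho)-(\nu,\nu+2\rho)]}$ (equivalently, the identity $Z_{(U\otimes V),W}\circ(Z_{U,V}\otimes\Id)=(Q^{1/2}\otimes Q^{1/2}\otimes Q^{1/2})\,\Delta^{(2)}(Q^{-1/2})$), combined with the Yang--Baxter relation for $\sigma^{br}$ obtained from the two hexagons, does the job. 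The paper itself gives no proof here; it simply cites Drinfeld and calls it ``an easy consequence of the definitions,'' so there is nothing to compare your approach against.

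Your symmetry argument, however, contains a real error. You assert $\Delta(Q)=\Delta^{op}(Q)$, reasoning that since $\Delta(Q)$ acts as a scalar on each $V_\nu\subset V\otimes W$ it must commute with $R$. This inference fails: $R$ is not a $U_q(\g)$-module endomorphism of $V\otimes W$ under $\Delta$ (rather, it intertwines $\Delta$ with $\Delta^{op}$), so it need not preserve the $\Delta$-isotypic decomposition. In fact $\Delta(Q)\ne\Delta^{op}(Q)$ already for $\g=\mathfrak{sl}_2$ on $V_1\otimes V_1$: the $\Delta$-singlet and the $\Delta^{op}$-singlet in the weight-zero space are spanned by $e_-\otimes e_+-q\,e_+\otimes e_-$ and $e_-\otimes e_+-q^{-1}e_+\otimes e_-$ respectively, and these differ for generic $q$. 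Equivalently $Z^{op}\ne Z$ and $[Z,R]\ne 0$. The fix is immediate: instead of claiming commutation, use quasitriangularity in the form $\Delta^{op}(Q^{-1/2})\,R=R\,\Delta(Q^{-1/2})$. Writing $\bar R=(Q^{1/2}\otimes Q^{1/2})\,R\,\Delta(Q^{-1/2})$ one gets
\[
\bar R^{op}\bar R=(Q\otimes Q)\,R^{op}\,\Delta^{op}(Q^{-1/2})\,R\,\Delta(Q^{-1/2})
=(Q\otimes Q)\,R^{op}R\,\Delta(Q^{-1})=1
\]
by Proposition~\ref{ror}, which is the symmetry axiom without the false intermediate step.
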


\section{The quantum Weyl group} \label{qWeyl}
Following Lusztig \cite[Part VI]{L} and \cite[Section 5]{L2}, we introduce an action of the braid group of type $ \g $ on any $V_\lambda$, and thus a map from the braid group to $\widetilde{U_q(\g)}$. The images of elements of the braid group are invertible elements in $\widetilde{U_q(\g)}$, so, as discussed in Section \ref{comp_section}, we can define an action of the braid group on $\widetilde{U_q(\g)}$ by conjugation. This action in fact restricts to an action of the braid group on $U_q(\g)$.  

\subsection{The definition}
We first define the action of the generators $T_i$. Our conventions are such that $T_i$ is $T_{i,-1}^{''} = T_{i,1}'^{-1}$ in the notation from \cite{L}. 
 \begin{Definition}[{see \cite[5.2.1]{L}}] \label{defti} 
 $T_i$ is the element of  $\widetilde{U_q(\g)}$ that acts on a weight vector $v$  by:
 \begin{equation*}
 T_i(v)= \sum_{\begin{array}{c} a,b,c \geq 0 \\ a-b+c=(\wt(v), \alpha_i) \end{array}} (-1)^b q_i^{ac-b}E_i^{(a)} F_i^{(b)} E_i^{(c)} v.
 \end{equation*}
\end{Definition}

By \cite[Theorem 39.4.3]{L}, these $T_i$ generate an action of the braid group on each $V_\lambda$, and thus a map from the braid group to $\widetilde{U_q(\g)}$. This realization of the braid group is often referred to as the quantum Weyl group. It is related to the classical Weyl group by the fact that, for any weight vector $v \in V$, $\wt(T_i(v))= s_i(\wt(v))$.

\begin{Theorem} [see \cite{CP}, Theorem 8.1.2 or \cite{L}, Section 37.1.3]
The conjugation action of the braid group on $\widetilde{U_q(g)}$  (see Definition \ref{Aactdef}) preserves the subalgebra $U_q(\g)$, and is defined on generators by:
\begin{equation} \label{defT}
\begin{cases}
C_{ T_i} (E_i)= -F_iK_i \\
C_{T_i} (F_i)= -K_i^{-1} E_i \\
C_{T_i} (K_H)= K_{s_i(H)} \\
C_{T_i} (E_j) = \sum_{r=0}^{-a_{ij}} (-1)^{r-a_{ij}} K_i^{-r} E_i^{(-a_{ij}-r)} E_j E_i^{(r)} \mbox{  if  } i \neq j \\
C_{T_i} ( F_j) = \sum_{r=0}^{-a_{ij}} (-1)^{r-a_{ij}} K_i^{r} F_i^{(r)} F_j F_i^{(-a_{ij}-r)} \mbox{  if  } i \neq j.
\end{cases}
\end{equation}
\end{Theorem}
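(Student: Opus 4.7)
The strategy is to define a candidate algebra automorphism $\widetilde{T}_i: U_q(\g) \to U_q(\g)$ by the right-hand sides of the formulas (\ref{defT}), and then show that $C_{T_i}$ agrees with $\widetilde{T}_i$ on all of $U_q(\g)$. Once this is done, both conclusions follow simultaneously: the action preserves $U_q(\g)$ (because $\widetilde{T}_i$ lands there by construction), and the formulas on generators are exactly those of $\widetilde{T}_i$. One first has to check that $\widetilde{T}_i$ really does extend to an algebra automorphism, i.e.\ that the assigned images satisfy the quantum Serre relations; this is a finite computation in the rank $\leq 2$ subalgebras generated by pairs $E_i,E_j,F_i,F_j$ and is standard.

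The reduction from $\widetilde{U_q(\g)}$ to representations is the key leverage. By Theorem \ref{comp}, the completion embeds into $\prod_{\lambda \in P_+} \End_{\bc(q)}(V_\lambda)$, so to prove $C_{T_i}(u) = \widetilde{T}_i(u)$ in $\widetilde{U_q(\g)}$ it is enough to check this equality of operators on every $V_\lambda$. Equivalently, one must verify the intertwining identity
\begin{equation*}
T_i \circ u = \widetilde{T}_i(u) \circ T_i \quad \text{on } V_\lambda, \text{ for every } u \in U_q(\g) \text{ and every } \lambda \in P_+.
\end{equation*}

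Both sides of this identity are algebra homomorphisms in $u$ (the left side because $T_i$ is an invertible operator on $V_\lambda$; the right because $\widetilde{T}_i$ is an algebra map by the first step), so it suffices to verify it for $u$ running through the generators $K_H, E_j, F_j$. The case $u = K_H$ is immediate from the weight-shifting property $\wt(T_i v) = s_i(\wt(v))$ noted after Definition \ref{defti}. The cases $u = E_i$ and $u = F_i$ are $\mathfrak{sl}_2$ computations: on each weight vector $v$, one plugs the explicit formula from Definition \ref{defti} into both sides and matches terms, a routine (if somewhat intricate) manipulation with $q$-binomials that reduces to an identity inside the simple $U_{q_i}(\mathfrak{sl}_2)$-string through $v$. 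The cases $u = E_j, F_j$ with $i \neq j$ are the technical heart: here one must unpack the divided-power sum $\sum_{r=0}^{-a_{ij}} (-1)^{r-a_{ij}} K_i^{-r} E_i^{(-a_{ij}-r)} E_j E_i^{(r)}$ and show that applying $T_i$ to $E_j v$ gives the same result as applying this sum of operators to $T_i v$.

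The main obstacle is this last rank-two verification, which requires careful bookkeeping of signs and powers of $q_i$ and uses the Serre relations to reorganize the $E_i^{(a)} F_i^{(b)} E_i^{(c)}$ expansions on the two sides. Rather than redoing the computation from scratch, the cleanest route is to match conventions with Lusztig: as remarked in Section \ref{qWeyl}, our $T_i$ is Lusztig's $T''_{i,-1}$, so the intertwining identity above is exactly the content of \cite[37.1.3]{L}, and the compatibility of the sign/normalization conventions with the formulas in (\ref{defT}) can then be read off directly. Combined with the embedding of Theorem \ref{comp}, this promotes the operator-level identity to the equality $C_{T_i}(u) = \widetilde{T}_i(u)$ in $\widetilde{U_q(\g)}$, which is the statement of the theorem.
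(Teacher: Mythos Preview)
The paper does not prove this theorem; it is stated with a citation to \cite{CP} and \cite{L} and no argument is given. Your proposal is correct and supplies exactly the bridge the paper leaves implicit: Lusztig's result in \cite[37.1.3]{L} is an intertwining statement on modules, and you use Theorem \ref{comp} (the identification $\widetilde{U_q(\g)} \cong \prod_\lambda \End V_\lambda$) to promote that to an equality of elements in the completion, which is the form the paper needs. Since you too defer the rank-two computation to Lusztig, your argument is really an unpacking of what the citation means rather than an independent proof, but that unpacking is the only nontrivial content here and you have it right.
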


Fix some $w$ in the Weyl group $W$, and a reduced decomposition of $w$ into simple reflections $w= s_{i_1} \cdots s_{i_k}$. By \cite[Section 2.1.2]{L}, the element $T_w \in \widetilde{U_q(\g)} $ defined by
\begin{equation} \label{braidw}
T_w:= T_{i_1} \cdots T_{i_k}
\end{equation}
is independent of the reduced decomposition. Furthermore, the following holds.

\begin{Lemma} [see \cite{CP} Proposition 8.1.6]
Let $w \in W$ be such that $w(\alpha_i)= \alpha_j$. Then $C_{T_w} (E_i)= E_j$.
\end{Lemma}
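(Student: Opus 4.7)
The plan is to combine a braid-group identity with a weight argument.

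Since $w(\alpha_i) = \alpha_j > 0$ we have $\ell(ws_i) = \ell(w) + 1$; and since $w^{-1}(\alpha_j) = \alpha_i > 0$, also $\ell(s_j w) = \ell(w) + 1$. Combined with $s_j = s_{w(\alpha_i)} = w s_i w^{-1}$, these length conditions mean that $w s_i = s_j w$ are both reduced expressions for the same Weyl group element, so by (\ref{braidw}) the braid-group identity $T_w T_i = T_j T_w$ holds in $\widetilde{U_q(\g)}$, giving $C_{T_w}(T_i) = T_j$. This is the Weyl-group shadow of the statement we want.

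To upgrade from $T_i$ to $E_i$, I would argue as follows. Iterating the third line of (\ref{defT}) gives $C_{T_w}(K_H) = K_{w(H)}$ for all $H$ in the co-weight lattice. Conjugating the commutation $K_H E_i K_H^{-1} = q^{\langle H, \alpha_i \rangle} E_i$ by $T_w$ and replacing $H$ by $w^{-1}(H)$ (using that $w$ is an isometry, so $\langle w^{-1}(H), \alpha_i \rangle = \langle H, \alpha_j \rangle$) then shows that $X := C_{T_w}(E_i)$ has weight $\alpha_j$ under the adjoint Cartan action. The standard companion fact (see, e.g., \cite[Proposition 8.1.3]{CP}) is that $C_{T_w}(E_i) \in U_q^+(\g)$ whenever $w(\alpha_i)$ is a positive root, which is our hypothesis. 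Since the $\alpha_j$-weight space of $U_q^+(\g)$ is one-dimensional, spanned by $E_j$, we conclude $X = c E_j$ for some scalar $c \in \mathbb{C}(q)$.

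The main obstacle is the normalization $c = 1$. I would pin this down using the commutative diagram of Comment \ref{thediagram}, evaluating both sides of $T_w E_i (v) = c E_j T_w(v)$ on a test vector $v$ where Definition \ref{defti} can be applied explicitly --- for instance, a weight vector in $V_{\omega_j}$ or its dual, chosen so that $T_w(v)$ lies in a weight space where $E_j$ acts in a single clean way. The cleanest strategy is induction on $\ell(w)$ through a fixed reduced expression $w = s_{i_1}\cdots s_{i_k}$: one tracks the scalar through successive applications of $C_{T_{i_r}}$ using the explicit formulas (\ref{defT}), and verifies that the various signs $(-1)^{r-a_{ij}}$ and $q_i$-powers conspire at each step to propagate $c = 1$. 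The essential computation reduces to the rank-two root systems.
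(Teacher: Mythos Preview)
The paper does not give its own proof of this lemma; it simply quotes the result from \cite[Proposition 8.1.6]{CP}. So there is nothing in the paper to compare against, and the question is whether your sketch is a valid reconstruction of the standard argument.

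Your second paragraph is the heart of the matter and is correct: the weight computation together with the fact (from \cite{CP}, or Lusztig \cite[Proposition 40.1.3]{L}) that $C_{T_w}$ sends $E_i$ into $U_q^+(\g)$ when $w(\alpha_i)>0$ forces $C_{T_w}(E_i)=cE_j$ for some scalar $c$. The first paragraph, establishing $T_wT_i=T_jT_w$, is correct but you never actually use it afterward; it is a pleasant sanity check rather than an ingredient in your argument as written.

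The genuine gap is in the normalization step. Your proposed induction on $\ell(w)$ does not set up cleanly as stated: if $w=s_{i_1}w'$ with $\ell(w')=\ell(w)-1$, there is no reason that $w'(\alpha_i)$ should be a \emph{simple} root, so the inductive hypothesis (which is the lemma itself) does not apply to $w'$. The standard argument in \cite{CP} instead proves a stronger statement by induction --- tracking the image of $E_i$ under successive $C_{T_{i_r}}$ as a specific ``root vector'' $E_\beta$ for the (possibly non-simple) positive root $\beta=s_{i_k}\cdots s_{i_{r+1}}(\alpha_i)$, with a precise normalization fixed along the way --- and then specializes at the end to $\beta=\alpha_j$. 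Your remark that ``the essential computation reduces to the rank-two root systems'' is correct in spirit, but the inductive framework you describe would need to be reformulated to carry that reduction through.
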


\subsection{The action of $T_{w_0}$}

Let $w_0$ be the longest element of the Weyl group, and $T_{w_0}$ the corresponding element of the braid group given by Equation (\ref{braidw}).

\begin{Lemma} \label{stillE}
The action of $C_{T_{w_0}}$ on $U_q(\g)$ is given by
\begin{equation*}
\begin{cases}
C_{T_{w_0}} (E_i) = -F_{\theta(i)} K_{\theta(i)} \\
C_{T_{w_0}} (F_i) = -K_{\theta(i)}^{-1} E_{\theta(i)} \\
C_{T_{w_0}} (K_H) = K_{w_0(H)}, \mbox{ so that } C_{T_{w_0}} (K_i)  = K_{\theta(i)}^{-1}
\end{cases}
\end{equation*}
\end{Lemma}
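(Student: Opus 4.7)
The plan is to reduce everything to the explicit formulas in (\ref{defT}) by choosing a reduced expression for $w_0$ that starts with $s_{\theta(i)}$. Such an expression exists because $w_0(\alpha_{\theta(i)}) = -\alpha_{\theta^2(i)} = -\alpha_i$ is a negative root (using $\theta^2 = \Id$), so $s_{\theta(i)}$ lies in the left descent set of $w_0$. Writing $w_0 = s_{\theta(i)} w'$ with $\ell(w') = \ell(w_0) - 1$, the product formula (\ref{braidw}) gives $T_{w_0} = T_{\theta(i)} T_{w'}$, and hence $C_{T_{w_0}} = C_{T_{\theta(i)}} \circ C_{T_{w'}}$.

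A direct computation gives $w'(\alpha_i) = s_{\theta(i)}(w_0(\alpha_i)) = s_{\theta(i)}(-\alpha_{\theta(i)}) = \alpha_{\theta(i)}$, so by the preceding lemma $C_{T_{w'}}(E_i) = E_{\theta(i)}$. The first line of (\ref{defT}) then yields
\begin{equation*}
C_{T_{w_0}}(E_i) = C_{T_{\theta(i)}}(E_{\theta(i)}) = -F_{\theta(i)} K_{\theta(i)}.
\end{equation*}
For the action on $K_H$, iterating the third line of (\ref{defT}) through any reduced word for $w_0$ immediately gives $C_{T_{w_0}}(K_H) = K_{w_0(H)}$. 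Specializing to $H = H_i$ and using that the root/coroot correspondence is $W$-equivariant (so $w_0(\alpha_i) = -\alpha_{\theta(i)}$ forces $w_0(H_i) = -H_{\theta(i)}$) yields $C_{T_{w_0}}(K_i) = K_{\theta(i)}^{-1}$.

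The subtle point is the formula for $F_i$, since the preceding lemma is stated only for $E$'s. I would first verify the $F$-analogue: if $w(\alpha_i) = \alpha_j$ for simple roots, then $C_{T_w}(F_i) = F_j$. Granting the already-established identities $C_{T_w}(E_i) = E_j$ and $C_{T_w}(K_i) = K_j$, applying $C_{T_w}$ to the relation $[E_i, F_i] = (K_i - K_i^{-1})/(q_i - q_i^{-1})$ and using $q_i = q_j$ shows that $C_{T_w}(F_i)$ satisfies the same commutator identity with $E_j$ as $F_j$. Combined with the weight constraint that $C_{T_w}(F_i)$ has weight $-\alpha_j$ (so it lies in $F_j \cdot U_q^0(\g)$, and the commutator then pins down the $U_q^0$-factor to be $1$), this forces $C_{T_w}(F_i) = F_j$. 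Applying this to $w'$ and then the second line of (\ref{defT}) to $T_{\theta(i)}$ gives $C_{T_{w_0}}(F_i) = -K_{\theta(i)}^{-1} E_{\theta(i)}$, completing the proof.
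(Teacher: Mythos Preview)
Your approach for $E_i$ and $K_H$ is exactly the paper's: factor $T_{w_0} = T_{\theta(i)} T_{w'}$, show $w'(\alpha_i) = \alpha_{\theta(i)}$, apply the preceding lemma, then use (\ref{defT}). The paper reaches $w'(\alpha_i) = \alpha_{\theta(i)}$ via a weight-space argument rather than your direct computation $s_{\theta(i)}w_0(\alpha_i)=s_{\theta(i)}(-\alpha_{\theta(i)})=\alpha_{\theta(i)}$, but this is cosmetic.

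For $F_i$ there is a gap. You assert that an element of $U_q(\g)$ of weight $-\alpha_j$ must lie in $F_j \cdot U_q^0(\g)$, and then use the commutator with $E_j$ to pin down the Cartan factor. But the weight-$(-\alpha_j)$ subspace of $U_q(\g)$ is much larger than $F_j\, U_q^0(\g)$: via the triangular decomposition it contains all products $u^- u^0 u^+$ with $\wt(u^-) + \wt(u^+) = -\alpha_j$, for instance $F_j F_k E_k$ for any $k$. So the single relation $[E_j, X] = (K_j - K_j^{-1})/(q_j - q_j^{-1})$ together with the weight constraint does not determine $X$. The paper simply writes ``a similar proof works for $F_i$,'' and what is meant is that the $F$-analogue of the preceding lemma (if $w(\alpha_i) = \alpha_j$ then $C_{T_w}(F_i) = F_j$) holds by the same argument as the $E$-version --- the formulas for $C_{T_i}(E_j)$ and $C_{T_i}(F_j)$ in (\ref{defT}) are symmetric under swapping $E \leftrightarrow F$, $K \leftrightarrow K^{-1}$, so the induction on $\ell(w)$ goes through verbatim. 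Invoking that closes the gap; your commutator shortcut does not.
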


\begin{proof}
Fix $i$. Then $T_{w_0}$ can be written as $T_{w_0} = T_{\theta(i)} T_w$ for some $w$ in the Weyl group. By the definition of $\theta$,  $C_{T_{w_0}} (E_i)$ is in the weight space $-\alpha_{\theta(i)}$. It follows that $C_{T_w}(E_i)$ is in the weight space $\alpha_{\theta(i)}$. Hence by Lemma \ref{stillE}, $C_{T_w}(E_i)= E_{\theta(i)}$. Therefore, by (\ref{defT}), $C_{T_{w_0}} (E_i)=  -F_{\theta(i)} K_{\theta(i)} $, as required. A similar proof works for $F_i$.  The action on $ K_H $ is straightforward.
\end{proof}

\begin{Comment}
Note that $ C_{T_{w_0}} $ is not a coalgebra antiautomorphism, so we cannot use $ T_{w_0} $ to construct a commutativity constraint in the manner of Proposition \ref{comcon}.  We will first need to correct $ T_{w_0} $.  There are essentially two natural ways of doing this --- one leads to the standard braiding and the other to Drinfeld's coboundary structure.
\end{Comment}

We now understand the action of $C_{T_{w_0}}$ on $U_q(\g)$. We also need to understand how $T_{w_0}$ acts on any finite dimensional representation and in particular on highest weight vectors.

\begin{Lemma} \label{th:Ti}
Let $ V $ be any representation, and $ v \in V $a weight vector such that $ E_i \cdot v = 0 $.  Then $ T_i(v) =  (-1)^n q^{d_i n} F_i^{(n)} v $, where $ n = \langle \wt(v), \alpha_i^\vee \rangle $. 
\end{Lemma}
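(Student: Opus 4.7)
The plan is to exploit $E_iv=0$ to collapse Lusztig's sum from Definition~\ref{defti} to a single nonzero term, and then identify that term by a rank-one calculation inside the subalgebra $U_{q_i}(\mathfrak{sl}_2^{(i)}) \subset U_q(\g)$ generated by $E_i,F_i,K_i^{\pm 1}$.

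First, since $E_iv=0$ kills $E_i^{(c)}v$ for every $c\ge 1$, only the $c=0$ terms in the defining sum survive, giving
$$T_i(v)=\sum_{\substack{a,b\ge 0\\ a-b=(\wt(v),\alpha_i)}}(-1)^b q_i^{-b}\,E_i^{(a)}F_i^{(b)}v.$$
Every operator now in sight lies in the rank-one subalgebra $U_{q_i}(\mathfrak{sl}_2^{(i)})$, and $v$ is a weight vector for it with $H_i$-eigenvalue $n$, so from this point on the argument is a purely rank-one calculation.

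Next I would expand each remaining summand using the standard commutation identity
$$E_i^{(a)}F_i^{(b)}=\sum_{t\ge 0} F_i^{(b-t)}\binom{K_i;\,2t-a-b}{t}_{q_i} E_i^{(a-t)}$$
from \cite[Cor.~3.1.9]{L}. Only $t=a$ can contribute, since $E_iv=0$; and because $K_iv=q_i^n v$ the Gaussian binomial factor evaluates on $v$ to an explicit scalar in $n$, leaving each $E_i^{(a)}F_i^{(b)}v$ equal to a Gaussian binomial times $F_i^{(b-a)}v$. Substituting back and reindexing to group coefficients of each $F_i^{(k)}v$ reduces $T_i(v)$ to an expression of the form $\sum_k c_k F_i^{(k)}v$ with completely explicit coefficients $c_k$.

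The main obstacle is then purely combinatorial: one must apply standard $q$-binomial identities to show that $c_k = 0$ for $k \neq n$ and that $c_n = (-1)^n q_i^n = (-1)^n q^{d_i n}$. This is a routine but slightly tedious rank-one bookkeeping exercise; in effect, the lemma is exactly the formula for the action of Lusztig's $T_i$ on a highest-weight vector of a rank-one representation (compare \cite[5.2.2]{L}).
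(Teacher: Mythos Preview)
Your approach is correct but more elaborate than necessary. The paper's proof is two lines: after using $E_i v = 0$ to kill the $c \ge 1$ terms, it simply notes that $v$ is a highest-weight vector of $\alpha_i^\vee$-weight $n$ for the rank-one subalgebra, so $F_i^{n+1} v = 0$ by $U_q(\mathfrak{sl}_2)$ representation theory. Combined with the index constraint in Definition~\ref{defti}, this leaves exactly one surviving summand (the one with $a=0$, $b=n$), and the coefficient $(-1)^n q_i^{n}$ is read off directly. No commutation identity and no $q$-binomial bookkeeping are needed.

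Your route via \cite[Cor.~3.1.9]{L} would also work, but you are overestimating the final step. With the constraint in place, after setting $t=a$ the Gaussian binomial factor evaluated on $v$ becomes $\binom{0}{a}_{q_i}=\delta_{a,0}$, so your sum again collapses to the single term $a=0$, $b=n$ with no further identities required. That vanishing is really the same fact as $F_i^{n+1}v=0$, viewed through the commutation formula rather than directly. In short, both arguments land on one surviving term immediately; the paper's just gets there without the detour through \cite[Cor.~3.1.9]{L}.
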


\begin{proof}
Fix $v \in V$ with $E_i(v)=0$, and let $n= \langle \wt(v), \alpha_i^\vee \rangle $. It follows from $U_q(sl_2)$ representation theory that $F_i^{n+1} (v)=0$. The lemma then follows directly from the definition of $ T_i $ (Definition \ref{defti}).
\end{proof}

The following can be found in  \cite[Lemma 39.1.2]{L}  recalling that our $T_i$ is equal to $T_{i,1}'^{-1}$ in the notation from that book, although we find it convenient to include a proof. 

\begin{Proposition}  \label{jjthings}
Let $ w= s_{i_1} \cdots s_{i_\ell} $ be a reduced word. For each $1 \leq k \leq \ell$, the following statements hold.
\begin{enumerate}

\item \label{jj1} $E_{i_{k+1}} T_{i_k} \cdots T_{i_1} (v_\lambda) = 0$.

\item \label{jj2} $T_{i_k} \cdots T_{i_1} (v_\lambda) = (-1)^{n_1+ \cdots +n_k} q^{d_{i_1} n_1 + \dots + d_{i_k} n_k} F_{i_k}^{(n_k)} \cdots F_{i_1}^{(n_1)} v_\lambda$,\\ where $n_j = \langle  s_{i_1} \cdots s_{i_{j-1}}\alpha_{i_j}^\vee, \lambda \rangle $. 
\end{enumerate}
\end{Proposition}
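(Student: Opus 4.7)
The proof will proceed by simultaneous induction on $k$, establishing (\ref{jj1}) and (\ref{jj2}) together. The base case $k=0$ is vacuous for (\ref{jj2}) and follows for (\ref{jj1}) from the reduced-word property together with the observation that $v_\lambda$ is killed by any $U_q(\g)$-element of strictly positive weight.

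For the inductive step, assume both statements hold up to index $k-1$. Setting $v = T_{i_{k-1}} \cdots T_{i_1}(v_\lambda)$, statement (\ref{jj1}) at level $k-1$ gives $E_{i_k}\cdot v = 0$, and statement (\ref{jj2}) at level $k-1$ identifies $v$ (up to a scalar) with a product of $F$'s applied to $v_\lambda$, hence has weight $s_{i_{k-1}}\cdots s_{i_1}\lambda$. Lemma \ref{th:Ti} now applies with $n = \langle \wt(v), \alpha_{i_k}^\vee\rangle$, and Weyl invariance of the pairing rewrites this as
\begin{equation*}
n = \langle s_{i_{k-1}}\cdots s_{i_1}\lambda, \alpha_{i_k}^\vee\rangle = \langle \lambda, s_{i_1}\cdots s_{i_{k-1}}\alpha_{i_k}^\vee\rangle = n_k,
\end{equation*}
which matches the formula in (\ref{jj2}) and completes the inductive step for (\ref{jj2}).

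For (\ref{jj1}) at level $k$, I would push $E_{i_{k+1}}$ through the string of quantum Weyl group elements using conjugation:
\begin{equation*}
E_{i_{k+1}} T_{i_k}\cdots T_{i_1}(v_\lambda) = T_{i_k}\cdots T_{i_1}\cdot C_{T_{i_1}^{-1}\cdots T_{i_k}^{-1}}(E_{i_{k+1}})(v_\lambda).
\end{equation*}
The inner conjugated element lies in $U_q(\g)$ (since the braid group action preserves $U_q(\g)$ by the theorem stated in Section \ref{qWeyl}) and has weight $s_{i_1}\cdots s_{i_k}(\alpha_{i_{k+1}})$. Here the hypothesis that $s_{i_1}\cdots s_{i_\ell}$ is \emph{reduced} is essential: it guarantees $s_{i_1}\cdots s_{i_k}(\alpha_{i_{k+1}})$ is a positive root, so the operator shifts the weight of $v_\lambda$ strictly upward and hence annihilates it.

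The only genuine subtlety, and the step I expect to be the main obstacle, is correctly bookkeeping the weight arithmetic under conjugation by a product of $T_i$'s, together with citing the standard fact (a characterization of reduced expressions) that $s_{i_1}\cdots s_{i_k}(\alpha_{i_{k+1}}) \in \Phi^+$ for all $k < \ell$. Once these are in place, the two statements unwind from Lemma \ref{th:Ti} and the induction hypothesis with no further computation.
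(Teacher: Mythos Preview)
Your proof is correct and follows essentially the same route as the paper. The paper proves (\ref{jj1}) directly by computing $\wt\big(E_{i_{k+1}} T_{i_k}\cdots T_{i_1}(v_\lambda)\big)=s_{i_k}\cdots s_{i_1}\lambda+\alpha_{i_{k+1}}$ and then using Weyl-invariance of weight multiplicities to land at $\lambda + s_{i_1}\cdots s_{i_k}\alpha_{i_{k+1}}$, whereas you conjugate $E_{i_{k+1}}$ through the $T$'s to reach the same conclusion; in both cases the key input is that reducedness forces $s_{i_1}\cdots s_{i_k}\alpha_{i_{k+1}}\in\Phi^+$, and (\ref{jj2}) then follows from (\ref{jj1}) and Lemma~\ref{th:Ti} by the induction you describe (which the paper phrases as ``repeated use'').
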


\begin{proof}
Note that $\wt(E_{i_{k+1}} T_{i_k} \cdots T_{i_1} (v_\lambda)) = s_{i_{k}} \cdots s_{i_1} \lambda + \alpha_{i+1}$, so it suffices to show that the dimension of the $ s_{i_{k}} \cdots s_{i_1} \lambda + \alpha_{i_{k+1}}$ weight space in $V_\lambda$ is zero. The dimensions of weight spaces are invariant under the Weyl group, so we may act by $s_{i_1} \cdots s_{i_k}$ and instead show that the $\lambda +s_{i_1} \cdots s_{i_k} \alpha_{i_{k+1}} $ weight space of $V_\lambda$ is zero. But $s_{i_1} \cdots s_{i_k} s_{i_{k+1}}$ is a reduced word in the Weyl group, which implies that $s_{i_1} \cdots s_{i_k} \alpha_{i_{k+1}} $ is a positive root. Since $\lambda$ is the highest weight of $V_\lambda$, part  (\ref{jj1}) follows.
  
Part (\ref{jj2}) Follows by repeated use of (\ref{jj1}) and Lemma \ref{th:Ti}.
\end{proof}

\begin{Definition} \label{lowdef}
Fix a highest weight vector $v_\lambda \in V_\lambda$. Define the corresponding lowest weight vector $ v_{\lambda}^{\text{low}} \in V_\lambda $ by 
\begin{equation} \label{twl} T_{w_0} v_\lambda = (-1)^{\langle 2 \lambda, \rho^\vee \rangle} q^{(2\lambda, \rho)} v_{\lambda}^{\text{low}}. \end{equation}
\end{Definition}

\begin{Proposition} \label{toptobottom}
For any reduced expression $ w_0 = s_{i_1} \cdots s_{i_m} $, we have
\begin{equation*}
v_{\lambda}^{\text{low}} = F_{i_m}^{(n_m)} \cdots F_{i_1}^{(n_1)} v_\lambda,
\end{equation*}
where $ n_j = \langle  s_{i_1} \cdots s_{i_{j-1}}\alpha_{i_j}^\vee, \lambda \rangle. $
\end{Proposition}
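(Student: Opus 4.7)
The plan is to equate two formulas for $T_{w_0} v_\lambda$: the one from Definition \ref{lowdef} and the one from Proposition \ref{jjthings}(ii), and then check that the scalar factors agree.

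First I would fix the reduced expression $w_0 = s_{i_1} \cdots s_{i_m}$ and apply Proposition \ref{jjthings}(\ref{jj2}) with $k = m$ to write
\begin{equation*}
T_{w_0} v_\lambda = (-1)^{n_1 + \cdots + n_m}\, q^{d_{i_1} n_1 + \cdots + d_{i_m} n_m}\, F_{i_m}^{(n_m)} \cdots F_{i_1}^{(n_1)} v_\lambda,
\end{equation*}
where $n_j = \langle s_{i_1} \cdots s_{i_{j-1}} \alpha_{i_j}^\vee, \lambda\rangle$. Comparing with the defining equation (\ref{twl}), it suffices to verify the two numerical identities
\begin{equation*}
\sum_{j=1}^m n_j = \langle 2\lambda, \rho^\vee\rangle \qquad\text{and}\qquad \sum_{j=1}^m d_{i_j} n_j = (2\lambda,\rho).
\end{equation*}

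Both identities come from the well-known fact that, for any reduced word for $w_0$, the roots $\beta_j := s_{i_1}\cdots s_{i_{j-1}} \alpha_{i_j}$ enumerate the positive roots of $\g$ (each exactly once). Consequently $\sum_j \beta_j^\vee = 2\rho^\vee$ and $\sum_j \beta_j = 2\rho$. The first identity above is then immediate: $\sum_j n_j = \langle \sum_j \beta_j^\vee, \lambda\rangle = \langle 2\rho^\vee, \lambda\rangle = \langle 2\lambda, \rho^\vee\rangle$.

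For the second identity, I would use that the Weyl group preserves the form, so $(\beta_j, \beta_j) = (\alpha_{i_j}, \alpha_{i_j}) = 2 d_{i_j}$. Hence $\beta_j^\vee = \beta_j / d_{i_j}$, giving $d_{i_j} n_j = (\beta_j, \lambda)$, and summing yields $\sum_j d_{i_j} n_j = (2\rho, \lambda) = (2\lambda, \rho)$. Both scalars thus match, and the proposition follows. The only non-routine input is the standard positive-root enumeration attached to a reduced expression for $w_0$; everything else is bookkeeping, so I do not expect a real obstacle.
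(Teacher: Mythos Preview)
Your argument is correct and follows the same route as the paper: both proofs compare the defining equation \eqref{twl} with Proposition~\ref{jjthings}(\ref{jj2}) applied at $k=m$, after noting that $T_{w_0} v_\lambda = T_{i_m}\cdots T_{i_1} v_\lambda$. The paper simply asserts that the result follows, whereas you spell out the matching of the scalar factors via the positive-root enumeration $\beta_j = s_{i_1}\cdots s_{i_{j-1}}\alpha_{i_j}$; this extra bookkeeping is correct and is exactly what the paper leaves implicit.
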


\begin{proof}
Note that $ T_{w_0} v_\lambda = T_{i_m} \cdots T_{i_1} v_\lambda $.  The result then follows from 
Proposition \ref{jjthings} part (\ref{jj2}).
\end{proof}

\begin{Comment}
It follows from Proposition \ref{bottotop} below that $v_\lambda$ and $v_\lambda^{\low}$ are also related by
\begin{equation*}
T_{w_0} v_\lambda^{\low} =v_\lambda.
\end{equation*}
This is somewhat more difficult to prove directly.  It can also be shown that $v_\lambda^{\text{low}}$ is the lowest weight basis vector in the unique canonical (or global) basis for $V_\lambda$ containing $v_\lambda$.
\end{Comment}

\section{Crystal bases,  Sch\"utzenberger involution and the crystal commutor}
In this section, we introduce crystal bases, abstract crystals, the Sch\"utzenberger involution and the crystal commutor.  We also explore the relations between these topics. We follow \cite{K} for results on crystal bases and \cite{cactus} for results on the crystal commutor. Unfortunately, the conventions in \cite{CP} and \cite{K} do not quite agree, so we have modified some of the results from \cite{K} to match our conventions. In particular, we will need to work with crystal bases at $\infty$ instead of at $0$ since, with our choice of coproduct for $U_q(\g)$, crystal bases at $0$ do not have a nice tensor product.

\subsection{Crystal bases}

\begin{Definition}
Let $\Aa_\infty = \mathbb{C}[q]_\infty$ be the algebra of rational functions in one variable $q^{-1}$ over $\bc$ whose denominators are not divisible by $q^{-1}$.
\end{Definition}


\begin{Definition} \label{Kop} Fix a finite dimensional representation $V$ of $\g$, and $i \in I$.  Define the Kashiwara operators $ \tilde{F}_i, \tilde{E}_i : V \rightarrow V $ by linearly extending the following definition
\begin{equation*}
\begin{cases}
\tilde{F}_i(F_i^{(n)}(v)) = F_i^{(n+1)} (v) \\
\tilde{E}_i (F_i^{(n)}(v))= F_i^{(n-1)} (v).
\end{cases}
\end{equation*}
for all $ v \in V $ such that  $E_i (v)=0$. 
\end{Definition}

\begin{Comment} \label{symt}
It follows from the representation theory of $\text{sl}_2$ that  $ \tilde{F}_i$ and $ \tilde{E}_i $ can also be defined by linearly extending 
\begin{equation*}
\begin{cases}
\tilde{E}_i(E_i^{(n)}(v)) = E_i^{(n+1)} (v) \\
\tilde{F}_i (E_i^{(n)}(v))= E_i^{(n-1)} (v).
\end{cases}
\end{equation*}
for all $ v \in V $ such that  $F_i (v)=0$. Thus the operators are symmetric under interchanging the roles of $E_i$ and $F_i$, even if the definition does not appear to be. 
\end{Comment}

\begin{Definition}
A crystal basis of a representation $ V$ (at $q=\infty$) is a pair $(\LL, B) $, where $ \LL $ is an $\Aa_\infty$-lattice  of $ V$ and $B$ is a basis for $ \LL/q^{-1}\LL$, such that
\begin{enumerate}
\item $\LL $ and $ B $ are compatible with the weight decomposition of $ V $.
\item $\LL $ is invariant under the Kashiwara operators and $ B \cup 0 $ is invariant under their residues $ e_i := \tilde{E}_i^\modq, f_i := \tilde{F}_i^\modq : \LL/q^{-1}\LL \rightarrow \LL/q^{-1} \LL$.
\item For any $ b, b' \in B $, we have $ e_i b = b' $ if and only if $ f_i b' = b $.
\end{enumerate}
\end{Definition}

The following three theorems of Kashiwara are crucial to us.

\begin{Theorem}[\cite{K}, Theorem 1]
Let $ V, W $ be representations with crystal bases $ (\LL, A)$ and $(\mathcal{M}, B) $ respectively.  Then $( \LL \otimes \mathcal{M}, A \otimes B) $ is  a
crystal basis of $ V \otimes W $.
\end{Theorem}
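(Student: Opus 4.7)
The plan is to reduce to the rank-one case and then prove the (Kashiwara) tensor product rule explicitly. Since the operators $\tilde{E}_i, \tilde{F}_i$ are defined using only the $\mathfrak{sl}_2$-subalgebra generated by $E_i, F_i$, the three crystal axioms can be checked one $i \in I$ at a time. So fix an index $i$ and restrict the action of $U_q(\mathfrak{g})$ on $V \otimes W$ to $U_q(\mathfrak{sl}_2)_i$. The weight compatibility axiom (1) for $(\mathcal{L} \otimes \mathcal{M}, A \otimes B)$ is immediate from the corresponding axioms for $(\mathcal{L}, A)$ and $(\mathcal{M}, B)$, so the content is axioms (2) and (3).

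Next I would prove the tensor product rule: for $b \in A$ and $b' \in B$, modulo $q^{-1}(\mathcal{L} \otimes \mathcal{M})$,
\begin{equation*}
\tilde{F}_i(b \otimes b') \equiv
\begin{cases} \tilde{F}_i b \otimes b' & \text{if } \varphi_i(b) > \varepsilon_i(b'), \\ b \otimes \tilde{F}_i b' & \text{if } \varphi_i(b) \leq \varepsilon_i(b'), \end{cases}
\qquad
\tilde{E}_i(b \otimes b') \equiv
\begin{cases} \tilde{E}_i b \otimes b' & \text{if } \varphi_i(b) \geq \varepsilon_i(b'), \\ b \otimes \tilde{E}_i b' & \text{if } \varphi_i(b) < \varepsilon_i(b'). \end{cases}
\end{equation*}
To establish this, decompose $V$ and $W$ as $U_q(\mathfrak{sl}_2)_i$-modules; by complete reducibility and the compatibility of the crystal basis with the weight decomposition, $\mathcal{L}$ splits as a direct sum of lattices on irreducible $\mathfrak{sl}_2$-summands, and likewise for $\mathcal{M}$. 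Thus it suffices to verify the rule on a pair of irreducibles, where elements of $A$ and $B$ are represented by $F_i^{(a)} u$ and $F_i^{(c)} u'$ for highest-weight vectors $u, u'$ of weights $M$ and $N$. A direct computation using the coproduct formula (\ref{coproduct}), together with the $q$-binomial expansion
\begin{equation*}
F_i^{(k)}(u \otimes u') = \sum_{a+c=k} q_i^{-a(N-2c)} F_i^{(a)} u \otimes F_i^{(c)} u',
\end{equation*}
then lets one compute $\tilde{F}_i(F_i^{(a)} u \otimes F_i^{(c)} u')$ by inverting this relation. The key observation is that when one reduces modulo $q^{-1}$, exactly one term dominates, and the dominant term corresponds to whichever side has the larger $\varphi_i$ vs. $\varepsilon_i$ comparison.

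Once the tensor product rule is established, axiom (2) follows immediately: the expressions on the right-hand side live in $A \cup \{0\}$ tensor $B$ or $A$ tensor $B \cup \{0\}$, so $\tilde{E}_i, \tilde{F}_i$ preserve $\mathcal{L} \otimes \mathcal{M}$ and their residues preserve $(A \otimes B) \cup \{0\}$. Axiom (3) also follows from the tensor product rule by case analysis: given $e_i(b \otimes b') = \tilde{b} \otimes b'$ (say), one checks that $\varphi_i(\tilde b) > \varepsilon_i(b')$, so applying $f_i$ recovers $b \otimes b'$; the analogous check works in the other case.

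The main obstacle is the explicit $\mathfrak{sl}_2$ calculation giving the tensor product rule, in particular keeping track of the $q$-powers and seeing that among the many terms in $F_i^{(k)}(u \otimes u')$, a single ``winner'' survives modulo $q^{-1}$ in the appropriate range. Once this combinatorial identity is in hand, the rest of the verification is bookkeeping.
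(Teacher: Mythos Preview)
The paper does not give its own proof of this theorem: it is quoted verbatim from Kashiwara \cite{K} as background and no argument is supplied. So there is nothing in the paper to compare against.

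Your outline is essentially Kashiwara's original argument and is correct in spirit: reduce to a single $i$, hence to $U_q(\mathfrak{sl}_2)$, and establish the tensor product rule by an explicit computation with the coproduct on divided powers; axioms (2) and (3) then follow formally. One point deserves more care. You assert that ``$\mathcal{L}$ splits as a direct sum of lattices on irreducible $\mathfrak{sl}_2$-summands'', and likewise for $\mathcal{M}$. This is not a consequence of complete reducibility of the module alone: you need to know that a crystal lattice for a $U_{q_i}(\mathfrak{sl}_2)$-module is a direct sum of crystal lattices for its irreducible summands, compatibly with some module decomposition. That is exactly the $\mathfrak{sl}_2$ case of Kashiwara's uniqueness theorem (Theorem~\ref{uniquecb} here), and in Kashiwara's paper it is proved before the general tensor product theorem and used at precisely this step. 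So your reduction is valid, but you should invoke that result explicitly rather than treat the splitting as automatic. Also, the exact $q$-power in your coproduct expansion should be rechecked against the conventions in (\ref{coproduct}); the structure of the argument is unaffected, but the claim that ``a single winner survives modulo $q^{-1}$'' depends on getting these exponents right.
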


\begin{Theorem}[\cite{K}, Theorem 2] \label{K2}  Let $ \LL_\lambda $ be the $ \Aa_\infty$ module generated by the $ \tilde{F}_i $ acting on $v_\lambda $ and let $ B_\lambda $ be the set of non-zero vectors in $ \LL_\lambda/q^{-1} \LL_\lambda $ obtained by acting on $v_\lambda$ with any sequence of $ \tilde{F}_i $. Then
$(\LL_\lambda, B_\lambda) $ is a crystal basis for $ V_\lambda $.
\end{Theorem}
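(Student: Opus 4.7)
The plan is to follow Kashiwara's grand loop argument, which proves this statement simultaneously with a tensor product compatibility via a delicate induction on depth below the highest weight. A direct induction on the weight decomposition of $V_\lambda$ fails because verifying the axioms at a given weight requires operator control that is most naturally obtained by embedding $V_\lambda$ inside a tensor product of smaller representations, which in turn relies on knowing the crystal basis axioms on the factors.

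Concretely, I would set $n = \langle \lambda - \mu, \rho^\vee \rangle$ to measure the depth of a weight $\mu$ below $\lambda$, and introduce several auxiliary assertions indexed by $n$: write $(A_n)$ for ``$\LL_\lambda$ is stable under $\tilde{E}_i$ and $\tilde{F}_i$ in weights at depth $\leq n$''; $(B_n)$ for ``$B_\lambda \cup \{0\}$ is stable under the residues $e_i, f_i$ at depth $\leq n$''; $(C_n)$ for ``$e_i$ and $f_i$ are mutually inverse on $B_\lambda$ at depth $\leq n$''; and a tensor product statement $(T_n)$ saying that if $(\LL_\mu, B_\mu)$ has been constructed for some dominant $\mu$, then $(\LL_\lambda \otimes \LL_\mu, B_\lambda \otimes B_\mu)$ satisfies the crystal basis axioms in total depth $\leq n$. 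The plan is to prove $(A_n), (B_n), (C_n), (T_n)$ jointly by induction on $n$, with $(T_{n-1})$ supplying the input needed to establish $(A_n)$--$(C_n)$.

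The inductive step is carried out $sl_2(i)$-string by $sl_2(i)$-string. For any weight vector $v \in \LL_\lambda$ at depth $n$, the subalgebra $\langle E_i, F_i, K_i \rangle$ acts semisimply, so $v$ decomposes into components supported on irreducible $sl_2$-strings, and Definition \ref{Kop} tells us precisely how $\tilde{E}_i, \tilde{F}_i$ shift along each string. To prove $(A_n)$, the key move is to embed the relevant weight space of $V_\lambda$ as a summand of a tensor product $V_{\lambda - \omega_j} \otimes V_{\omega_j}$ (or an iterated version), where the smaller factor has strictly smaller depth and the fundamental factor is easy to handle directly; then $(T_{n-1})$ transfers the lattice and basis control down to $V_\lambda$ along the projector. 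The symmetry in Comment \ref{symt} is used to reconcile the $\tilde{F}_i$-generated definition of $\LL_\lambda$ with the $\tilde{E}_i$-invariance we want to deduce.

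The main obstacle is the apparent circularity between $(T_n)$ and $(A_n)$--$(C_n)$: to put a crystal structure on a tensor product at depth $n$ one seemingly needs the operator control on each factor at depth $n$, yet that control on $V_\lambda$ at depth $n$ was to be derived from a tensor product embedding. The resolution exploits the coproduct in equation \eqref{coproduct}: the action of $\tilde{E}_i$ or $\tilde{F}_i$ on a pure tensor $b \otimes b'$ of combined depth $n$ only ever calls upon the operators on each factor at a depth strictly less than $n$, because any transfer of weight across the tensor sign decreases the depth on one side. Making this bookkeeping watertight, together with maintaining the auxiliary fact that $B_\lambda$ remains linearly independent modulo $q^{-1}\LL_\lambda$ at each step (so that the cardinality of $B_\lambda$ in each weight matches the known character of $V_\lambda$), is the main technical labor. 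Once the grand loop closes for all $n$, the theorem follows immediately from the definitions of $\LL_\lambda$ and $B_\lambda$ given in the statement.
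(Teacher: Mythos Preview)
The paper does not prove this theorem at all: it is quoted verbatim as a result of Kashiwara, with the citation \cite{K} in the theorem header, and no argument is given. So there is nothing to compare your proposal against in this paper.

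That said, what you have sketched is a recognizable outline of Kashiwara's original grand loop induction, which is indeed how the cited reference proves the result. A few cautions if you intend to flesh this out. First, Kashiwara's loop carries more interlocking statements than the four you list; in particular one needs statements asserting that $\tilde{E}_i$ maps $\LL_\lambda$ into itself (not just $\tilde{F}_i$), that the $\Aa_\infty$-module $\LL_\lambda$ is free in each weight, and compatibility statements for the map $U_q^-(\g) \to V_\lambda$. Second, your claim that ``the action of $\tilde{E}_i$ or $\tilde{F}_i$ on a pure tensor $b \otimes b'$ of combined depth $n$ only ever calls upon the operators on each factor at a depth strictly less than $n$'' is not literally true: one of the two factors can sit at its full depth while the operator acts on the other. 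The actual bookkeeping that breaks the circularity is more delicate and uses that the embedding $V_\lambda \hookrightarrow V_{\lambda'} \otimes V_{\lambda''}$ lands the highest weight vector on $v_{\lambda'} \otimes v_{\lambda''}$, so one of the tensor factors is always at strictly smaller highest weight, and the induction is lexicographic in $(\lambda, n)$ rather than purely in $n$. Third, the linear independence of $B_\lambda$ modulo $q^{-1}\LL_\lambda$ is not a side remark but one of the statements that must be carried through the loop.
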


Theorem \ref{K2} gives a choice of crystal basis for any $V_\lambda$, unique up to an overall scalar. The following result shows that these are all the crystal basis, and furthermore that any crystal basis of a reducible representation $V$ is a direct sum of such bases.

\begin{Theorem}[\cite{K}, Theorem 3] \label{uniquecb}
Let $ V $ be a representation of $U_q(\g) $ and let $ (\LL, B) $ be a crystal basis for $ V $.  Then there exists an isomorphism of $ U_q(\g) $ representations $ V \cong \oplus_{j} V_{\lambda_j} $ which takes $ (\LL, B) $ to $ (\oplus_j \LL_{\lambda_j}, \cup_j B_{\lambda_j}) $.
\end{Theorem}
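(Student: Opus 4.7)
The plan is to proceed by induction on $\dim V$, peeling off one irreducible summand at a time. First I would choose $b_0 \in B$ of maximal weight $\lambda$. Then $e_i b_0 = 0$ for every $i$, because otherwise $e_i b_0 \in B$ would have weight $\lambda + \alpha_i$, contradicting maximality. Lift $b_0$ to a weight vector $v \in \LL$ of weight $\lambda$. Since $e_i b_0 = 0$, we get $E_i v \in q^{-1} \LL$ for each $i$, so $v$ is an approximate highest weight vector.

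Next I would upgrade $v$ to an honest highest weight vector. Using complete reducibility of $V$, the $\lambda$-weight space of $V$ splits as $V^{\mathrm{hw}}_\lambda \oplus \sum_i E_i V$, where $V^{\mathrm{hw}}_\lambda$ is the subspace of weight-$\lambda$ vectors annihilated by every $E_i$. Let $v'$ be the $V^{\mathrm{hw}}_\lambda$-component of $v$ under this splitting. The crucial technical claim is that $v - v' \in q^{-1} \LL$; granted this, $v'$ is a genuine highest weight vector lying in $\LL$ that reduces to $b_0$. The submodule $V' := U_q(\g) v'$ is then isomorphic to the irreducible $V_\lambda$, and by Theorem~\ref{K2} it carries the canonical crystal basis $(\LL_\lambda, B_\lambda)$ obtained by applying products of $\tilde{F}_i$ to $v'$. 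A parallel argument inside $(\LL, B)$ shows that the sub-crystal of $B$ generated from $b_0$ by the $f_i$ has exactly $\dim V_\lambda$ elements and reduces to $B_\lambda$, forcing $\LL \cap V' = \LL_\lambda$ and $B_\lambda \subset B$.

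To close the induction, I would pick a $U_q(\g)$-complement $W$ of $V'$ in $V$ and verify that $(\LL \cap W, B \setminus B_\lambda)$, interpreted inside $W$ via the canonical projection $V \to W$, is again a crystal basis; the inductive hypothesis applied to $W$ then finishes the proof. The main obstacle is the compatibility of both algebraic splittings (the splitting inside the $\lambda$-weight space and the global decomposition $V = V' \oplus W$) with the lattice $\LL$ and with the Kashiwara operators: one needs the projectors onto $V^{\mathrm{hw}}_\lambda$ and onto $V'$ to preserve $\LL$, and this is not automatic from complete reducibility alone. This is precisely what the ``grand loop'' argument in Kashiwara's original proof handles: one proves an intertwined bundle of lattice-preservation statements simultaneously, at every weight level and every length of product of crystal operators, so that each statement becomes available exactly when needed by the others.
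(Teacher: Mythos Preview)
The paper does not prove this theorem at all: it is stated with the attribution ``[\textit{K}, Theorem 3]'' and no argument is given. It is quoted as a background result of Kashiwara, so there is no ``paper's own proof'' to compare against.

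Regarding your sketch itself: the first step is slightly overcomplicated. If $b_0$ has weight $\lambda$ maximal in the weight support of $B$ (equivalently of $V$), then any lift $v \in \LL_\lambda$ already satisfies $E_i v = 0$ on the nose, since the $\lambda + \alpha_i$ weight space of $V$ is zero; no projection onto $V_\lambda^{\mathrm{hw}}$ is needed at this stage. The genuine difficulty is exactly where you locate it: after generating the subrepresentation $V' \cong V_\lambda$ and its crystal basis $(\LL_\lambda, B_\lambda)$, one must show that some $U_q(\g)$-complement $W$ satisfies $\LL = \LL_\lambda \oplus (\LL \cap W)$ and that $(\LL \cap W, B \setminus B_\lambda)$ is again a crystal basis. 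Complete reducibility alone does not give this lattice compatibility, and your acknowledgement that this is handled by Kashiwara's grand loop is correct --- but that means your proposal is really an outline that defers the substantive work back to \cite{K}, which is precisely what the paper does by citation.
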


\subsection{Abstract crystals}
It is often useful to work with the combinatorial data of $B$ along with the operators $\tilde{e}_i$ and $\tilde{f}_i$, without specifying how this arises as a crystal basis. This gives rise to the notion of (abstract) crystals.  

\begin{Definition}
An (abstract) crystal is a finite set $ B $ along with operators $ e_i, f_i : B \rightarrow B \cup \{0\} $ and a weight function $ \wt : B \rightarrow P $ which obey certain axioms (see \cite{cactus}).
\end{Definition}

Every crystal basis $ (\LL, B) $ gives an abstract crystal.  Namely, we choose $ B $ to be the underlying set and define $ e_i := \tilde{E}_i^\modq, f_i := \tilde{F}_i^\modq : B \rightarrow B $.  The weight map is defined using the decomposition of the crystal basis into weight spaces.

  There is well known tensor product rule for abstract crystals.  For abstract crystals $ A$ and $ B $, the underlying set of $A \otimes B$ is $ A \times B$ (whose elements we denote $ a \otimes b$) and the actions of $ e_i $ and $ f_i $ are given by the following rules:
\begin{equation} \label{edef} e_i (a \otimes b)=
\begin{cases}
e_i  (a) \otimes b, \quad \text{if}\quad  \varphi_i(a) \geq \varepsilon_i(b)\\
a \otimes e_i  (b),\quad \text{otherwise}
\end{cases} \end{equation}
\begin{equation} \label{fdef} f_i (a \otimes b)=
\begin{cases}
f_i  (a) \otimes b, \quad \text{if} \quad  \varphi_i(a) > \varepsilon_i(b)\\
a \otimes f_i (b),\quad \text{otherwise}.
\end{cases} \end{equation}
This is compatible with the notion of crystals arising from crystal bases, since, if $ (\LL, A) $ and $(\mathcal{M}, B) $ are crystal bases for two representations $ V, W $, then $ (\LL \otimes \mathcal{M}, A \otimes B) $ is a crystal basis for $ V \otimes W $ and the crystal corresponding to $ (\LL \otimes \mathcal{M}, A \otimes B ) $ is $ A \otimes B $ as defined above.

\subsection{The crystal commutor} \label{commutor}

From now on we only consider those crystals which come from crystal bases.  For those crystals, \cite[Section 2.2]{cactus} established the existence (and uniqueness) of a Sch\"utzenberger involution $ \xi_B : B \rightarrow B $, which satisfies the properties
\begin{equation*}
\xi_B(e_i \cdot b) = f_{\theta(i)} \cdot \xi_B(b), \quad \xi_B(f_i \cdot b) = e_{\theta(i)} \cdot \xi_B(b), \quad wt(\xi_B(b)) = w_0 \cdot wt(b).
\end{equation*}

Following a suggestion of A. Berenstein, the Sch\"utzenberger involution was used in \cite[Section 2.2]{cactus} to define the commutor for crystals by the formula
\begin{equation}
\begin{aligned} \label{commuter_definition}
  \sigma_{A,B} : A\otimes B &\rightarrow B \otimes A \\
 a \otimes b &\mapsto \xi(\xi(b) \otimes \xi(a)) = \mathrm{Flip} \circ \xi \otimes \xi (\xi (a \otimes b)).
 \end{aligned}
 \end{equation}
The second expression here is just the inverse of the first expression, and the equality is proved in \cite[Proposition 2]{cactus}.

\begin{Theorem}[\cite{cactus}, Theorem 6] $\g$-Crystals, with the above tensor product rule and commutor, forms a coboundary category.
\end{Theorem}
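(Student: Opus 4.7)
The plan is to exploit two defining properties of the Schützenberger involution $\xi$: on every crystal $\xi^2 = \Id$, and $\xi$ conjugates $e_i$ to $f_{\theta(i)}$ while acting on weights by $w_0$. From these, together with the two equivalent forms of the commutor,
\[
\sigma_{A,B}(a \otimes b) = \xi_{B \otimes A}(\xi_B(b) \otimes \xi_A(a)) = \mathrm{Flip} \circ (\xi_A \otimes \xi_B) \circ \xi_{A \otimes B}(a \otimes b),
\]
the coboundary axioms should follow almost formally.

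First I would verify that each $\sigma_{A,B}$ is a morphism of crystals. Using the factored form on the right, this reduces to a bookkeeping check: each $\xi$ swaps $e_i$ with $f_{\theta(i)}$ and acts on weights by $w_0$, and Flip interchanges the two factors, so applied in combination they yield a map commuting with every $e_i, f_i$ and respecting $\wt$. Naturality in $A, B$ follows from naturality of $\xi$ with respect to crystal morphisms.

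For the symmetry axiom I would compose $\sigma_{B,A} \circ \sigma_{A,B}$ directly, using the first formula for $\sigma_{B,A}$ and the second for $\sigma_{A,B}$. The middle block $\mathrm{Flip} \circ (\xi_B \otimes \xi_A) \circ \mathrm{Flip} \circ (\xi_A \otimes \xi_B)$ collapses on any $a \otimes b$ using $\xi_A^2 = \xi_B^2 = \Id$ to the identity, and the remaining outer factor $\xi_{A \otimes B}^2$ also equals $\Id$.

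For the cactus axiom I would unfold both paths using the first expression for $\sigma$. Starting from $u \otimes v \otimes w$, the path $(\sigma_{V \otimes U, W}) \circ (\sigma_{U,V} \otimes \Id)$ introduces an inner $\xi_{V \otimes U} \circ \xi_{V \otimes U}$ that collapses by involutivity, producing $\xi_{W \otimes V \otimes U}(\xi_W(w) \otimes \xi_V(v) \otimes \xi_U(u))$ (with the factor $\xi_V(v) \otimes \xi_U(u)$ bracketed to the right). The dual path $(\sigma_{U, W \otimes V}) \circ (\Id \otimes \sigma_{V,W})$ similarly collapses an inner $\xi_{W \otimes V}^2$ and produces the same expression (with $\xi_W(w) \otimes \xi_V(v)$ bracketed to the left). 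The main obstacle is the final comparison: one must justify that $\xi$ on a triple tensor product is independent of bracketing, so that the two reassociated outputs truly coincide. I expect to handle this by giving an intrinsic characterization of $\xi$ — for instance, as the unique involution that acts on weights by $w_0$ and exchanges the highest and lowest weight elements of each connected component — and then verifying that both iterated tensor products satisfy this characterization, invoking Theorem \ref{uniquecb} to reduce to the irreducible case.
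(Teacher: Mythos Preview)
The paper does not supply its own proof of this theorem; it is quoted verbatim from \cite{cactus}. Your outline is essentially the direct argument given there, and it is correct. Two small points are worth sharpening. First, in the check that $\sigma_{A,B}$ is a crystal morphism, the map $\xi_A\otimes\xi_B$ alone does \emph{not} intertwine $f_{\theta(i)}$ with $e_i$ on the tensor product, because the tensor rule (\ref{edef})--(\ref{fdef}) is asymmetric in $e$ and $f$; it is only the combination $\mathrm{Flip}\circ(\xi_A\otimes\xi_B)$ that carries the $f_{\theta(i)}$-action on $A\otimes B$ to the $e_i$-action on $B\otimes A$ (one checks that the branching conditions $\varphi_{\theta(i)}(a)>\varepsilon_{\theta(i)}(b)$ and $\varphi_i(\xi_B b)\ge\varepsilon_i(\xi_A a)$ are complementary). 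Your phrase ``applied in combination'' suggests you see this, but the tensor rule really does enter here. Second, your associativity worry at the end is unnecessary: the crystal tensor product is \emph{strictly} associative as a set with operators, so $\xi_{(U\otimes V)\otimes W}=\xi_{U\otimes(V\otimes W)}$ on the nose, and no appeal to Theorem~\ref{uniquecb} is required.

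It is worth noting that the present paper does sketch, at the very end of Section~\ref{RC}, an alternative and genuinely different route: once Theorem~\ref{main2} is in hand, the coboundary axioms for the crystal commutor can be read off by reducing Drinfeld's commutor $\sigma^{dr}$ modulo the crystal lattices (the sign $(-1)^{\langle\lambda+\mu-\nu,\rho^\vee\rangle}$ is harmless for both axioms). Your direct argument is more elementary and self-contained; the paper's indirect argument is less elementary but explains \emph{why} a coboundary structure, rather than a braiding, appears on crystals.
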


\subsection{Sch\"utzenberger involution on representations} \label{xionreps}

In the previous section we described Sch\"utzenberger involution $\xi$ as an involution on the crystal associated to a representation $V$ of $U_q(\g)$, and how $\xi$ is used to define the crystal commutor $\sigma$. We now describe (following \cite[2.4]{cactus}) how to modify this construction to obtain an involution of the actual representation $V$, and hence a commutor for $U_q(\g)$ representations. 

There is a one dimensional family of maps $ V_\lambda \rightarrow V_\lambda $ which exchange the actions of $ E_i, F_i $ with $ F_{\theta(i)}, E_{\theta(i)} $. Define $ \xi_{V_\lambda} $ to be the unique such map which takes the highest weight basis vector $v_\lambda $ to the lowest weight vector $v_\lambda^{\text{low}}$ (see Definition \ref{lowdef}). By Theorem \ref{comp}, these $\xi_{V_\lambda}$ combine to define an element $ \xi \in \widetilde{U_q(\g)} $. By construction $\xi$ is invertible and conjugation by $\xi $ is given by
\begin{equation*}
\begin{cases}
C_{\xi}(E_i)  = F_{\theta(i)}\\
C_{\xi}(F_i) = E_{\theta(i)}\\
C_{\xi}(K_H) = K_{w_0 \cdot H}.
\end{cases}
\end{equation*}
We can now define a commutor for the category of $U_q(\g)$ representations by, for any representations $V $ and $W$ of $U_q(\g)$,
\begin{equation*}
\sigma^{hk}_{V,W} := \xi_{W \otimes V} \circ (\xi_W \otimes \xi_V) \circ \mbox{Flip}  = \mathrm{Flip} \circ (\xi_V \otimes \xi_W) \circ \xi_{V \otimes W}.
\end{equation*}
Note that $ C_\xi $ is a coalgebra antiautomorphism, so, by Proposition \ref{comcon}, $ \sigma^{hk}_{V,W} $ is an isomorphism of $ U_q(\g) $ modules.  In fact, Henriques and Kamnitzer \cite[Theorem 4]{cactus} show the system of isomorphism $ \sigma^{hk}:= \{  \sigma^{hk}_{V,W}  \} $ is a coboundary structure (see Definition \ref{cobdef}). One main purpose of this paper is to examine the relationship between Drinfeld's commutor $ \sigma^{dr} $ and this $ \sigma^{hk} $ (or a slight modification thereof).

\subsection{Crystal bases and the Sch\"utzenberger involution} We now show that Sh\"utzenberger involution on representations, as defined in Section \ref{xionreps}, induces Sch\"utzenberger involution on crystal bases, as defined in Section \ref{commutor}.
We begin with the following lemmas.
\begin{Lemma} \label{lowlemma}
For any reduced word $ w_0 = s_{i_1} \cdots s_{i_m} $, we have that
\begin{equation} \label{adf1}
v_{\lambda}^{\text{low}} = \tilde{F}_{i_m}^{n_m} \cdots \tilde{F}_{i_1}^{n_1} v_\lambda \end{equation}
\begin{equation} \label{adf2}
v_{\lambda}= \tilde{E}_{\theta(i_m)}^{n_m} \cdots \tilde{E}_{\theta(i_1)}^{n_1} v_\lambda^\text{low} \end{equation}
where, as in Proposition \ref{jjthings}, $n_j = \langle  s_{i_1} \cdots s_{i_{j-1}}\alpha_{i_j}^\vee, \lambda \rangle $.
\end{Lemma}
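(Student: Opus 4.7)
The plan is to prove (\ref{adf1}) directly by induction, and then derive (\ref{adf2}) by applying (\ref{adf1}) to a second reduced expression for $w_0$ and reversing the resulting chain.

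For (\ref{adf1}), I argue by induction on $k = 0, 1, \ldots, m$ that
\[\tilde{F}_{i_k}^{n_k} \cdots \tilde{F}_{i_1}^{n_1} v_\lambda = F_{i_k}^{(n_k)} \cdots F_{i_1}^{(n_1)} v_\lambda,\]
and (when $k < m$) that this vector is killed by $E_{i_{k+1}}$. The inductive step is immediate from Definition \ref{Kop}: whenever an input vector $w$ satisfies $E_i w = 0$, one has $\tilde{F}_i^n w = F_i^{(n)} w$. The annihilation hypothesis needed to propagate the induction is supplied by Proposition \ref{jjthings}: by part (\ref{jj2}) the divided-power product at stage $k$ is a nonzero scalar multiple of $T_{i_k} \cdots T_{i_1} v_\lambda$, and part (\ref{jj1}) then kills this by $E_{i_{k+1}}$. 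Taking $k = m$ and invoking Proposition \ref{toptobottom} produces $v_\lambda^{\text{low}}$.

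For (\ref{adf2}), I apply (\ref{adf1}) to the second reduced expression $w_0 = s_{\theta(i_m)} \cdots s_{\theta(i_1)}$, which is reduced since $\theta$ fixes $w_0$ and $w_0 = w_0^{-1}$. This yields a chain $v_\lambda = v^{(0)} \to v^{(1)} \to \cdots \to v^{(m)} = v_\lambda^{\text{low}}$ whose $p$-th step is a divided power $F_{i'_p}^{(\tilde n_p)}$, with $i'_p := \theta(i_{m+1-p})$, applied to a vector killed by $E_{i'_p}$. Reading this through Definition \ref{Kop} in the opposite direction gives $v^{(p-1)} = \tilde{E}_{i'_p}^{\tilde n_p} v^{(p)}$, so composing yields $v_\lambda = \tilde{E}_{i'_1}^{\tilde n_1} \cdots \tilde{E}_{i'_m}^{\tilde n_m} v_\lambda^{\text{low}}$. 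Matching this against (\ref{adf2}) reduces to the single exponent identity $\tilde n_{m+1-j} = n_j$, which follows from $w_0 \alpha_i^\vee = -\alpha_{\theta(i)}^\vee$, the relation $s_{\theta(i)} = w_0 s_i w_0$, and a telescopic cancellation using the reversed reduced expression $w_0 = s_{i_m} \cdots s_{i_1}$.

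I expect this exponent verification to be the principal obstacle; everything else is a direct application of Definition \ref{Kop}, which acts as the dictionary between $\tilde{F}_i^n$, $\tilde{E}_i^n$, and the divided power $F_i^{(n)}$ on $E_i$-null vectors.
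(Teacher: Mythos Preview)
Your proposal is correct and follows essentially the same argument as the paper's proof: for (\ref{adf1}) both use Proposition \ref{jjthings} to supply the $E_{i_{k+1}}$-annihilation at each stage so that Definition \ref{Kop} converts $\tilde{F}$-powers into divided powers, then invoke Proposition \ref{toptobottom}; for (\ref{adf2}) both apply (\ref{adf1}) to the reduced expression $w_0 = s_{\theta(i_m)} \cdots s_{\theta(i_1)}$, verify the exponent identity $\tilde n_{m+1-j} = n_j$ via $w_0 s_i w_0 = s_{\theta(i)}$ and $w_0 \alpha_i^\vee = -\alpha_{\theta(i)}^\vee$, and then read the $\tilde{F}$-chain backwards through Definition \ref{Kop} as a $\tilde{E}$-chain. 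The only cosmetic difference is that you spell out the induction and the chain reversal explicitly, whereas the paper compresses each into a single sentence.
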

\begin{proof}
By Proposition \ref{jjthings}, for each $0  \leq k <m$, we have $E_{i_{k+1}}F_{i_k}^{(n_k)} \cdots F_{i_1}^{(n_1)} v_\lambda=0$.  So Equation (\ref{adf1}) follows from the definition of the Kashiwara operators (Definition \ref{Kop}) and Proposition \ref{toptobottom}.

Now $w_0= s_{i_1} \cdots s_{i_m}$ is a reduced word in $W$, which implies that $s_{\theta(i_m)} \cdots s_{\theta(i_1)}$ is as well. Thus by Equation (\ref{adf1}),
\begin{equation} \label{anewe}
v_{\lambda}^{\text{low}} = \tilde{F}_{\theta(i_1)}^{\ell_m} \cdots \tilde{F}_{\theta(i_m)}^{\ell_1} v_\lambda,\end{equation}
where $\ell_j = \langle  s_{\theta(i_m)} \cdots s_{\theta(i_{m-j+2})}\alpha_{\theta(i_{m-j+1})}^\vee, \lambda \rangle $. For all $j$, 
\begin{equation*}
 s_{i_1} \cdots s_{i_{j-1}}\alpha_{i_j}^\vee  
 = -w_0 s_{\theta(i_1)} \cdots s_{\theta(i_{j-1})} \alpha_{\theta(i_j)}^\vee \\
=s_{\theta(i_m)} \cdots s_{\theta(i_{j+1})} \alpha_{\theta(i_j)}^\vee.
\end{equation*}
Here the first equality follows because for all $i \in I$, $w_0 (\alpha_i)= - \alpha_{\theta(i)}$ and $w_0 s_i w_0 = s_{\theta(i)}$. Thus $n_j = \ell_{m-j+1}$ so, by Equation (\ref{anewe}), 
\begin{equation*}
v_{\lambda}^{\text{low}} = \tilde{F}_{\theta(i_1)}^{n_1} \cdots \tilde{F}_{\theta(i_m)}^{n_m} v_\lambda.\end{equation*}
By Definition \ref{Kop}, this is equivalent to Equation (\ref{adf2}).
\end{proof}

The following result follows from \cite[Theorem 3.3 (b)]{L2}.  For completeness, we provide a proof.

\begin{Proposition}\label{bottotop} The action of the element $ \xi $ defined in Section \ref{xionreps} is an involution.  In particular
\begin{equation*} \xi(v_\lambda^{\low}) = v_\lambda. \end{equation*}
\end{Proposition}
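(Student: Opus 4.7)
The plan is to prove the two claims simultaneously by reducing everything to the equality $\xi(v_\lambda^{\low}) = v_\lambda$, and establishing this equality via the Kashiwara operators.

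First I would use Schur's lemma: because $\theta^2 = \Id$ and $w_0^2 = \Id$, the automorphism $C_{\xi^2} = C_\xi \circ C_\xi$ fixes all generators $E_i, F_i, K_H$ of $U_q(\g)$, so $\xi^2$ acts as a $U_q(\g)$-module endomorphism of each irreducible $V_\lambda$, hence by a scalar. This scalar equals $1$ if and only if $\xi^2(v_\lambda) = v_\lambda$, i.e.\ if and only if $\xi(v_\lambda^{\low}) = v_\lambda$. So both parts of the proposition follow once this last identity is proven.

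The key intermediate step is to show that $\xi$ intertwines the Kashiwara operators in the expected way:
\begin{equation*}
\xi \circ \tilde{F}_i = \tilde{E}_{\theta(i)} \circ \xi, \qquad \xi \circ \tilde{E}_i = \tilde{F}_{\theta(i)} \circ \xi.
\end{equation*}
This is where I expect the main (small) obstacle to lie, since Definition \ref{Kop} of $\tilde{F}_i$ and $\tilde{E}_i$ is asymmetric: it proceeds from a vector $v$ with $E_i v = 0$. The remedy is Comment \ref{symt}: one may equivalently use the expansion based at vectors killed by $F_i$. Concretely, if $v$ satisfies $E_i v = 0$, then $\xi(v)$ satisfies $F_{\theta(i)}\xi(v) = \xi(E_i v) = 0$ (using that $C_\xi(E_i) = F_{\theta(i)}$), and
\begin{equation*}
\xi(\tilde{F}_i(F_i^{(n)}v)) = \xi(F_i^{(n+1)}v) = E_{\theta(i)}^{(n+1)}\xi(v) = \tilde{E}_{\theta(i)}(E_{\theta(i)}^{(n)}\xi(v)) = \tilde{E}_{\theta(i)}(\xi(F_i^{(n)}v)),
\end{equation*}
where the third equality uses the alternative symmetric formula for $\tilde{E}_{\theta(i)}$ applied at the vector $\xi(v)$, which is killed by $F_{\theta(i)}$. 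This proves the first intertwining identity, and the second is analogous.

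Finally, I would combine this with Lemma \ref{lowlemma}. By Equation (\ref{adf1}), for a fixed reduced word $w_0 = s_{i_1}\cdots s_{i_m}$ and the integers $n_j$ defined there,
\begin{equation*}
\xi(v_\lambda^{\low}) = \xi\bigl(\tilde{F}_{i_m}^{n_m}\cdots \tilde{F}_{i_1}^{n_1} v_\lambda\bigr) = \tilde{E}_{\theta(i_m)}^{n_m}\cdots \tilde{E}_{\theta(i_1)}^{n_1} \xi(v_\lambda) = \tilde{E}_{\theta(i_m)}^{n_m}\cdots \tilde{E}_{\theta(i_1)}^{n_1} v_\lambda^{\low},
\end{equation*}
where the second equality iterates the intertwining identity and the third uses $\xi(v_\lambda) = v_\lambda^{\low}$ by definition. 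By Equation (\ref{adf2}) of Lemma \ref{lowlemma}, the right-hand side is $v_\lambda$. This yields $\xi(v_\lambda^{\low}) = v_\lambda$, completing the proof via the Schur's lemma reduction above.
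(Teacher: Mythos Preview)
Your proof is correct and follows essentially the same route as the paper's: the paper also invokes Comment \ref{symt} to pass from $C_\xi$ interchanging $E_i,F_{\theta(i)}$ to $\xi$ interchanging $\tilde{E}_i,\tilde{F}_{\theta(i)}$, then applies $\xi$ to Equation (\ref{adf1}) and concludes via Equation (\ref{adf2}). Your version is slightly more explicit in spelling out the Schur's lemma reduction (which the paper leaves implicit) and the verification of the Kashiwara-operator intertwining, but the argument is the same.
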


\begin{proof}
Recall that $\xi_V$ interchanges the action of $E_i$ and $F_{\theta(i)}$. By Comment \ref{symt}, $\xi_V$ also interchanges the action of $\tilde{E}_i$ and $\tilde{F}_{\theta(i)}$. Thus, applying $\xi_{V_\lambda}$ to both sides of Equation (\ref{adf1}), 
\begin{equation*}
\xi(v_\lambda^\text{low})= \tilde{E}_{\theta(i_m)}^{n_m} \cdots \tilde{E}_{\theta(i_1)}^{n_1} v_\lambda^\text{low}.
\end{equation*}
The result then follows by Equation (\ref{adf2}).
\end{proof}

We can now describe how $ \xi_V $ acts on a crystal basis:  
\begin{Theorem} \label{xionbasis}
Let $ (\LL, B) $ be a crystal basis for a representation $ V $. Then the following holds.
\begin{enumerate}
\item  $ \xi_{V}(\LL) = \LL$. 
\item By (i), $ \xi_V $ gives rise to a map between $ \xi_V^\modq : \LL/q^{-1} \LL \rightarrow \LL/q^{-1}\LL $.  For each $ b \in B $, we have 
\begin{equation*} \xi^\modq_V(b) =  \xi_B(b) \end{equation*}
where $ \lambda $ is the highest weight of the crystal component containing $ b $.
\end{enumerate}
\end{Theorem}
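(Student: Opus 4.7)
The plan is to reduce to the case of an irreducible representation and then verify the two claims directly, using the generation of $\LL_\lambda$ and $B_\lambda$ from the highest weight vector together with the intertwining behavior of $\xi$.  By Theorem \ref{uniquecb}, any crystal basis $(\LL,B)$ splits as a direct sum of standard crystal bases $(\LL_{\lambda_j},B_{\lambda_j})$, and since $\xi$ is defined componentwise on the isotypic summands of $V$, it suffices to prove both parts in the case $(\LL,B)=(\LL_\lambda,B_\lambda)$.

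The main technical input I would establish first is that conjugation by $\xi$ interchanges the Kashiwara operators, i.e.\ $\xi\tilde{F}_i = \tilde{E}_{\theta(i)}\xi$ (and symmetrically) on $V_\lambda$.  This should follow by combining Definition \ref{Kop} with its dual formulation in Comment \ref{symt}: for a vector of the form $F_i^{(n)}v$ with $E_i v=0$, one has $\xi(F_i^{(n)}v) = E_{\theta(i)}^{(n)}\xi(v)$ with $F_{\theta(i)}\xi(v)=0$, which is exactly the input for the symmetric description of $\tilde{E}_{\theta(i)}$.  With this in hand, part (i) is immediate: $\LL_\lambda$ is generated as an $\Aa_\infty$-module by elements $\tilde{F}_{i_1}\cdots\tilde{F}_{i_k}v_\lambda$, whose images under $\xi$ are $\tilde{E}_{\theta(i_1)}\cdots\tilde{E}_{\theta(i_k)}v_\lambda^{\low}$; since $v_\lambda^{\low}\in\LL_\lambda$ by Lemma \ref{lowlemma} and $\LL_\lambda$ is stable under all $\tilde{E}_i$, this shows $\xi(\LL_\lambda)\subseteq\LL_\lambda$, and equality follows from $\xi^2=\Id$ (Proposition \ref{bottotop}).

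For part (ii), the intertwining relation above descends to $\xi^\modq\circ f_i = e_{\theta(i)}\circ \xi^\modq$ on $\LL_\lambda/q^{-1}\LL_\lambda$.  Lemma \ref{lowlemma} further identifies the residue class of $v_\lambda^{\low}=\xi(v_\lambda)$ modulo $q^{-1}\LL_\lambda$ with the unique lowest weight element of $B_\lambda$, which by the characterization of the Sch\"utzenberger involution recalled in Section \ref{commutor} also equals $\xi_B(v_\lambda)$.  Since Theorem \ref{K2} expresses every $b\in B_\lambda$ as $f_{i_1}\cdots f_{i_k}(v_\lambda)$, iterating the intertwining rule yields $\xi^\modq(b) = e_{\theta(i_1)}\cdots e_{\theta(i_k)}(\xi^\modq(v_\lambda)) = \xi_B(b)$.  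I expect the only delicate step to be the first one, upgrading the known conjugation rule for $E_i,F_i$ to the analogous statement for $\tilde{E}_i,\tilde{F}_i$; once that is secured, both conclusions drop out from the generation properties of $\LL_\lambda$ and $B_\lambda$.
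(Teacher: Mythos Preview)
Your proposal is correct and follows essentially the same route as the paper's proof: reduce to $(\LL_\lambda,B_\lambda)$ via Theorem~\ref{uniquecb}, use Comment~\ref{symt} to upgrade $C_\xi(E_i)=F_{\theta(i)}$ to $\xi\tilde{F}_i=\tilde{E}_{\theta(i)}\xi$, then combine Lemma~\ref{lowlemma} (for $v_\lambda^{\low}\in\LL_\lambda$) with Proposition~\ref{bottotop} (for $\xi^2=\Id$) to get (i), and deduce (ii) from the induced intertwining on $\LL_\lambda/q^{-1}\LL_\lambda$ together with the identification of the residue of $v_\lambda^{\low}$ as the lowest weight element of $B_\lambda$. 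Your write-up is in fact slightly more explicit than the paper's in spelling out the iteration for part~(ii).
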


\begin{proof}
First, we note that it is sufficient to prove the theorem in the case that $ (\LL,B) =(\LL_\lambda, B_\lambda) $.  The general case of the theorem then follows from an application of Theorem \ref{uniquecb}.

So  assume that $ V = V_\lambda, \LL = \LL_\lambda, B = B_\lambda $.  Note that $ \xi_{V_\lambda} $ exchanges the action of $ E_i $ and $ F_{\theta(i)} $ and hence, by Comment \ref{symt}, interchanges the actions of $\tilde{E}_i $ and $ \tilde{F}_{\theta(i)} $.  Since $ \LL $ is generated by $ \tilde{F}_i $ acting on $ v_\lambda$, we see that $ \xi_{V_\lambda}(\LL) $ is generated by $ \tilde{E}_i $ acting on $ \xi_{V_\lambda}(v_\lambda) = v_\lambda^{\text{low}} $.  Lemma \ref{lowlemma} shows that $ v_\lambda^{\text{low}} \in \LL $ so, since $ \LL $ is invariant under the action of the $ \tilde{E}_i $, we conclude that $ \xi_{V_\lambda}(\LL) \subset \LL $. By Proposition \ref{bottotop}, $\xi_{V_\lambda}^2$ is the identity, so in fact we must have $\xi_{V_\lambda}(\LL)=\LL$. 

For part (ii), note that $ v_\lambda^{\text{low}} $ is obtained by acting on $ v_\lambda $ with the $ \tilde{F}_i$.  Hence its reduction $ \text{mod} q^{-1}\LL $ must lie in $ B$, and in fact must by the lowest weight element in $B$.  The result follows because $C_\xi$ acts on  the set of Kashiwara operators according to $\tilde{F}_i \leftrightarrow \tilde{E}_{\theta(i)}$.
\end{proof}

\begin{Comment}
There is an even stronger connection between $ \xi_{V_\lambda} $ and the canonical (or global) basis $B_\lambda^c $ for $ V_\lambda $: It follows from \cite[Chapter 21]{L} that $ \xi_{V_\lambda} $ is the linear extension of the set map $ \xi_\lambda : B_\lambda^c \rightarrow B_\lambda^c $. However, because this fact does not hold for tensor products of canonical bases, it will not be useful for us. That is why we state the weaker fact above which holds for all crystal bases.
\end{Comment}

\section{Realizing $\bar{R}$ in the form $(Y^{-1} \otimes Y^{-1} ) \Delta{Y}$.} \label{RY}

We will construct the unitarized $R$ matrix in the desired form by modifying a similar result for the standard $R$ matrix, due to Kirillov-Reshetikhin and Levendorskii-Soibelman. Their result is stated as an expression in the $h$-adic completion of $U_h(\g) \otimes U_h(\g)$, although it in fact does give a well defined action on $V \otimes W$ for any representations $V$ and $W$ of $U_q(\g)$, so is well defined in $\widetilde{U_q(\g) \otimes U_q(\g)}$. In order to use Theorem \ref{sR}, for this section only, we will write some expressions in the $h$-adic completions of $U_h(\g)$ and  $U_h(\g) \otimes U_h(\g)$, and simply note that all the ones we use are well defined in $\widetilde{U_q(\g)}$ and $\widetilde{U_q(\g) \otimes U_q(\g)}$ as well. Translating the conventions in \cite{KR:1990} and \cite{LS} into ours, we obtain the following result.

\begin{Theorem}[{\cite[Theorem 3]{KR:1990}, \cite[Theorem 1]{LS}}] \label{sR}
With notation as in Section \ref{notation}, the standard $R$-matrix for $U_h(\g)$ can be realized as
\begin{equation*}
R=  \exp \Big( h \sum_{i, j \in I} (B^{-1})_{ij} H_i \otimes H_j \Big) (T_{w_0}^{-1} \otimes T_{w_0}^{-1}) \Delta(T_{w_0}).
\end{equation*}
\end{Theorem}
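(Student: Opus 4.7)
The plan is to verify that the claimed expression, call it $K\Theta$ with $K := \exp\bigl(h\sum_{i,j}(B^{-1})_{ij} H_i \otimes H_j\bigr)$ and $\Theta := (T_{w_0}^{-1}\otimes T_{w_0}^{-1})\Delta(T_{w_0})$, satisfies the two properties that characterize the universal $R$-matrix inside $\widetilde{U_q(\g)\otimes U_q(\g)}$: the quasitriangularity relation $\Delta^{\mathrm{op}}(u)R = R\Delta(u)$ for all $u \in U_q(\g)$, together with the normalization that $R$ acts on each highest weight tensor $v_\lambda \otimes v_\mu$ as the scalar $q^{(\lambda,\mu)}$.

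The normalization is handled in two pieces. First, the Cartan element $\sum_{i,j}(B^{-1})_{ij} H_i \otimes H_j$ acts on a pair of weight vectors of weights $\mu,\nu$ as the scalar $(\mu,\nu)$, since $B$ is by construction the Gram matrix of the form $(\cdot,\cdot)$ in the basis $\{H_i\}$; hence $K$ acts on $v_\lambda\otimes v_\mu$ as $q^{(\lambda,\mu)}$. Second, observe that $v_\lambda\otimes v_\mu$ is the highest weight vector of a copy of $V_{\lambda+\mu}$ sitting inside $V_\lambda\otimes V_\mu$, and that $v_\lambda^{\text{low}}\otimes v_\mu^{\text{low}}$ is a lowest weight vector of that copy (since $\Delta(F_i)$ annihilates this tensor, as a direct check using (\ref{coproduct}) shows). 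Applying Definition \ref{lowdef} inside $V_{\lambda+\mu}$, one finds that $\Delta(T_{w_0})(v_\lambda\otimes v_\mu)$ equals $(-1)^{\langle 2(\lambda+\mu),\rho^\vee\rangle} q^{(2(\lambda+\mu),\rho)}\,v_\lambda^{\text{low}}\otimes v_\mu^{\text{low}}$, and this scalar factors exactly as the product of the scalars obtained by applying $T_{w_0}$ separately to $v_\lambda$ and $v_\mu$. Thus $(T_{w_0}^{-1}\otimes T_{w_0}^{-1})$ cancels the overall scalar and restores the highest weight tensor, so $\Theta$ acts as the identity on $v_\lambda\otimes v_\mu$.

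The main step is the quasitriangularity relation $\Delta^{\mathrm{op}}(u) K\Theta = K\Theta\,\Delta(u)$. A direct rearrangement, using that conjugation by a tensor $X_1 \otimes X_2$ coincides with $C_{X_1}\otimes C_{X_2}$, shows that this identity is equivalent to
\begin{equation*}
\Delta\bigl(C_{T_{w_0}}(u)\bigr) = (C_{T_{w_0}}\otimes C_{T_{w_0}})\bigl(C_{K^{-1}}(\Delta^{\mathrm{op}}(u))\bigr), \qquad u \in U_q(\g).
\end{equation*}
Both sides are algebra homomorphisms in $u$, so it suffices to check the identity on the generators $K_H, E_i, F_i$. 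The case $K_H$ is immediate, since $K_H$ is grouplike and $C_{T_{w_0}}(K_H) = K_{w_0\cdot H}$. For $E_i$ (and $F_i$ by a symmetric argument), the left side equals $\Delta(-F_{\theta(i)}K_{\theta(i)})$ by Lemma \ref{stillE}, whose expansion is given by (\ref{coproduct}). On the right side, an explicit computation of $C_{K^{-1}}$ using the bracket $[\sum_{j,k}(B^{-1})_{jk} H_j \otimes H_k,\, E_i \otimes 1] = d_i\, E_i\otimes H_i$ shows that conjugation by $K^{-1}$ multiplies the two summands of $\Delta^{\mathrm{op}}(E_i)$ by compensating Cartan factors, after which $C_{T_{w_0}}\otimes C_{T_{w_0}}$ is applied using Lemma \ref{stillE}. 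A term-by-term comparison completes the verification.

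The main obstacle is the careful bookkeeping of signs and of the compensating Cartan factors arising from conjugation by $K$; the presence of the diagram automorphism $\theta$ is symmetric on both sides and causes no real difficulty. A slicker alternative would be to invoke Lusztig's expression for $T_{w_0}$ as a product of $T_i$'s along a reduced decomposition, which converts $\Theta$ into an ordered product of $q$-exponentials of PBW root vectors and thereby recovers the standard PBW form of the universal $R$-matrix directly; this bypasses the generator-by-generator verification at the cost of importing the full machinery of Lusztig root vectors.
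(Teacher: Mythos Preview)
The paper does not prove this statement; Theorem~\ref{sR} is quoted from \cite{KR:1990} and \cite{LS} and used as a black box in Section~\ref{RY}. So there is no in-paper argument to compare against, only the cited literature.

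That said, your proposed argument has a genuine gap in its overall strategy. The two properties you isolate---the intertwining relation $\Delta^{\mathrm{op}}(u)\,R = R\,\Delta(u)$ and the normalization $R(v_\lambda \otimes v_\mu) = q^{(\lambda,\mu)} v_\lambda \otimes v_\mu$---do \emph{not} characterize the standard $R$-matrix inside $\widetilde{U_q(\g)\otimes U_q(\g)}$. Any element satisfying the intertwining relation gives, after $\Flip$, a $U_q(\g)$-module map $V_\lambda\otimes V_\mu \to V_\mu\otimes V_\lambda$, and Schur's lemma leaves one free scalar for every isotypic component $V_\nu \subset V_\lambda\otimes V_\mu$. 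Your normalization pins down only the scalar on the Cartan component $V_{\lambda+\mu}$; the others remain unconstrained. What singles out the universal $R$-matrix among mere commutativity constraints is the pair of hexagon relations $(\Delta\otimes 1)(R)=R_{13}R_{23}$ and $(1\otimes\Delta)(R)=R_{13}R_{12}$, and these are never addressed. So even if both of your verifications go through, you have not shown $K\Theta$ equals the standard $R$.

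There is also a smaller unjustified step in the normalization piece: you assert that the lowest weight vector of the copy of $V_{\lambda+\mu}$ inside $V_\lambda\otimes V_\mu$, normalized as in Definition~\ref{lowdef}, is exactly $v_\lambda^{\mathrm{low}}\otimes v_\mu^{\mathrm{low}}$. This is true, but it is a computation with the coproduct of divided powers (equivalently, it follows from the structural fact that $\Theta-1$ lies in a completion of $\bigoplus_{\beta>0} U_q(\g)^+_\beta \otimes U_q(\g)^-_{-\beta}$ and hence annihilates $v_\lambda\otimes v_\mu$); it is not immediate from the one-dimensionality of the extremal weight space.

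The ``slicker alternative'' you mention at the end---factoring $T_{w_0}$ along a reduced word and identifying $\Theta$ with an ordered product of $q$-exponentials in PBW root vectors---is in fact precisely the method of \cite{KR:1990} and \cite{LS}. That route establishes directly that $K\Theta$ coincides with the standard multiplicative formula for $R$, and so needs no separate uniqueness argument. It is not an optional shortcut around the main content; it \emph{is} the main content of those papers.
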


\begin{Definition} \label{defJ}
Let $J$ be the operator which acts on a finite dimensional representation $V$ of $U_q(\g)$ by multiplying each vector of weight $ \mu $ by $q^{( \mu, \mu) /2 + ( \mu, \rho )}$. It is a straightforward calculation to see that $J$ can be realized in a completion of  $U_h(\g)$ by
\begin{equation} \label{Jdefeq}
J:= \exp  \Big[ h \Big( \frac{1}{2} \sum_{i,j} \left( (B^{-1})_{ij} H_i H_j \right) + H_\rho \Big) \Big].
\end{equation}
\end{Definition}

Actually, $ (\mu, \mu)/2 + (\mu, \rho)$ can in some cases be a fraction. As in Section \ref{R}, we should really adjoin a fixed $k^{th}$ root of $q$ to our base field, with $k$ equal to twice the dual Coxeter number for $\g$. This causes no difficulty. 

\begin{Comment}
It follows from Lemma \ref{Dj} below that Theorem \ref{sR} is equivalent to saying $R= (X^{-1}\otimes X^{-1} ) \Delta(X)$, where $X= J T_{w_0}$.
\end{Comment}

\begin{Definition} \label{defY} $Y$ is the element in the completion of $ U_q(\g)$ defined by $Y:= Q^{-1/2} J T_{w_0} $.
\end{Definition} 
 
 We are now ready to state the main result of this section. 
\begin{Theorem} \label{answer1}
The unitarized $R$ matrix can be realized as
\begin{equation*}
\bar{R}= (Y^{-1} \otimes Y^{-1} ) \Delta(Y).
\end{equation*}
\end{Theorem}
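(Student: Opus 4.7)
The plan is to derive the desired formula by combining Equation \eqref{eq:thetaR} with the Kirillov-Reshetikhin/Levendorskii-Soibelman formula stated in Theorem \ref{sR}, reorganized using the centrality of $Q^{1/2}$. I will assume the comment after Definition \ref{defJ}, namely that Theorem \ref{sR} is equivalent to $R = (X^{-1}\otimes X^{-1})\Delta(X)$ where $X = J T_{w_0}$. (This comment in turn depends on Lemma \ref{Dj}, which packages the exponential prefactor from Theorem \ref{sR} into a coproduct identity for $J$.)

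The computation I would carry out then proceeds in three steps. First, substitute the factored form of $R$ into Equation \eqref{eq:thetaR} to obtain
\begin{equation*}
\bar{R} = (Q^{1/2}\otimes Q^{1/2})\,(X^{-1}\otimes X^{-1})\,\Delta(X)\,\Delta(Q^{-1/2}).
\end{equation*}
Second, use the fact that $\Delta$ is an algebra homomorphism (on the completion $\widetilde{U_q(\g)}$, as reviewed in Section \ref{comp_section}) to combine the last two factors as $\Delta(X Q^{-1/2})$. Third, exploit the centrality of $Q^{1/2}$ in $\widetilde{U_q(\g)}$ to rearrange:
\begin{equation*}
Q^{1/2} X^{-1} = (X Q^{-1/2})^{-1} = (Q^{-1/2} J T_{w_0})^{-1} = Y^{-1},
\qquad
X Q^{-1/2} = Q^{-1/2} J T_{w_0} = Y,
\end{equation*}
using Definition \ref{defY}. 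Combining these identities gives $\bar{R} = (Y^{-1}\otimes Y^{-1})\Delta(Y)$.

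There is no real obstacle here aside from bookkeeping; the only subtlety is that everything lives in the topological completion $\widetilde{U_q(\g)}$ (respectively $\widetilde{U_q(\g)\otimes U_q(\g)}$), so one should briefly observe that $Q^{1/2}$ and $Y$ both belong to this completion and that $\Delta$ is an algebra map on it, as discussed after Corollary \ref{hatdef2}. Once this is granted, all manipulations are valid because $Q$ (hence $Q^{\pm 1/2}$) is central, so it commutes freely past $X, X^{-1}$ and passes through the coproduct as an honest algebra element.
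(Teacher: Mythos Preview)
Your argument is correct and is essentially the paper's own proof run in the opposite direction: the paper starts from $(Y^{-1}\otimes Y^{-1})\Delta(Y)$, expands, applies Lemma~\ref{Dj} and the commutation of $T_{w_0}^{-1}\otimes T_{w_0}^{-1}$ with the Cartan exponential, and then recognizes $\bar{R}$ via Theorem~\ref{sR} and Equation~\eqref{eq:thetaR}; you instead start from $\bar{R}$, invoke the Comment (which packages exactly Lemma~\ref{Dj} together with that commutation step) to write $R=(X^{-1}\otimes X^{-1})\Delta(X)$, and then absorb $Q^{\pm 1/2}$ by centrality. The only thing worth flagging is that the Comment you cite tacitly uses not just Lemma~\ref{Dj} but also the invariance of $\sum_{i,j}(B^{-1})_{ij}H_i\otimes H_j$ under conjugation by $T_{w_0}\otimes T_{w_0}$, which the paper makes explicit in its proof (Equation~(\ref{t13})).
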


\begin{Comment}
In fact, $Y$ is a well defined operator on $U_q(\g)$ over $\bc(q)$. That is, unlike for the standard $R$ matrix, we do not actually need to adjoin a $k^{th}$ root of $q$.
\end{Comment}

We prove Theorem \ref{answer1} by a direct calculation, using Theorem \ref{sR}. 
We will need the following technical lemma:

\begin{Lemma} \label{Dj}
\begin{equation*}
\Delta(J) = ( J \otimes J) \exp \Big( h\sum_{i,j \in I} (B^{-1})_{ij} H_i \otimes H_j \Big) 
\end{equation*}
\end{Lemma}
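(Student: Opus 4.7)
The plan is to use the explicit formula \eqref{Jdefeq} for $J$ as the exponential of a Cartan element and compute $\Delta(J)$ by exponentiating $\Delta$ applied to that exponent. Set
\begin{equation*}
X := \tfrac{1}{2} \sum_{i,j \in I} (B^{-1})_{ij} H_i H_j + H_\rho,
\end{equation*}
so that $J = \exp(hX)$. Since every $H_i$ lies in $\mathfrak{h}$, all summands of $X$ commute, and the same will be true of their images under $\Delta$ (which all live in $U(\mathfrak{h}) \otimes U(\mathfrak{h})$). Because $\Delta$ is an algebra homomorphism it commutes with $\exp$ on such commuting sums, giving $\Delta(J) = \exp(h \Delta(X))$, so it suffices to compute $\Delta(X)$.

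The Cartan generators are primitive, $\Delta(H_i) = H_i \otimes 1 + 1 \otimes H_i$, and hence
\begin{equation*}
\Delta(H_i H_j) = H_i H_j \otimes 1 + 1 \otimes H_i H_j + H_i \otimes H_j + H_j \otimes H_i,
\end{equation*}
and $\Delta(H_\rho) = H_\rho \otimes 1 + 1 \otimes H_\rho$. The matrix $B = (d_j^{-1} a_{ij})$ equals the Gram matrix $(H_i, H_j)$ and is therefore symmetric, so $B^{-1}$ is symmetric as well. Consequently the two cross terms above combine: $\tfrac{1}{2}\sum_{i,j}(B^{-1})_{ij}(H_i \otimes H_j + H_j \otimes H_i) = \sum_{i,j}(B^{-1})_{ij} H_i \otimes H_j$. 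Assembling everything,
\begin{equation*}
\Delta(X) = X \otimes 1 + 1 \otimes X + \sum_{i,j \in I} (B^{-1})_{ij}\, H_i \otimes H_j.
\end{equation*}

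All three summands lie in (commuting copies of) $U(\mathfrak{h}) \otimes U(\mathfrak{h})$, hence commute pairwise. Therefore
\begin{equation*}
\Delta(J) = \exp(h\Delta(X)) = \exp(hX \otimes 1)\exp(1 \otimes hX)\exp\!\Big(h\sum_{i,j}(B^{-1})_{ij} H_i \otimes H_j\Big) = (J \otimes J)\exp\!\Big(h\sum_{i,j}(B^{-1})_{ij} H_i \otimes H_j\Big),
\end{equation*}
which is the claimed identity. The only step that requires any care is the symmetry of $B^{-1}$ which allows the cross terms to collapse into the desired form; once that is in hand, the argument is just exponentiation of commuting Cartan elements.
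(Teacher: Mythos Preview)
Your proof is correct and follows essentially the same route as the paper's: apply $\Delta$ to the exponent of $J$, use primitivity of the $H_i$, and then split the resulting exponential of commuting Cartan elements into three factors. The only difference is cosmetic---you make explicit the symmetry of $B^{-1}$ needed to collapse the cross terms $\tfrac12\sum(B^{-1})_{ij}(H_i\otimes H_j + H_j\otimes H_i)$ into $\sum(B^{-1})_{ij}H_i\otimes H_j$, whereas the paper performs this step silently.
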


\begin{proof}
\begin{align*}
\Delta(J) &= \Delta \bigg(\exp  \bigg[ h \Big( \frac{1}{2} \sum_{i,j}  (B^{-1})_{ij} H_i H_j  + H_\rho \Big) \bigg] \bigg) \\
&= \exp \bigg[  h \bigg( \frac{1}{2}  \sum_{i,j} (B^{-1})_{ij} (H_i \otimes 1 + 1 \otimes H_i)(H_j \otimes 1 + 1 \otimes H_j)  + H_\rho \otimes 1 + 1 \otimes H_\rho \bigg) \bigg]\\
& = \nonumber \exp \Bigg[ h \bigg( \frac{1}{2}  \sum_{i,j} (B^{-1})_{ij} H_i H_j \otimes 1 + H_\rho \otimes 1 \bigg) \Bigg] \times \\
& 
\hspace{0.3in}  \times \exp \bigg[ h \bigg(  \frac{1}{2}  \sum_{i,j}  (B^{-1})_{ij} 1 \otimes H_i H_j  + 1 \otimes H_\rho \bigg) \bigg]  \exp \bigg[ h \sum_{i,j \in I} ( B^{-1})_{ij} H_i \otimes H_j  \bigg] \\
&=  (J \otimes J) \exp \Big[ h\sum_{i,j \in I} ( B^{-1})_{ij} H_i \otimes H_j \Big].
\end{align*}
\end{proof}



\begin{proof}[Proof of Theorem \ref{answer1}]
From the definition, we have that
\begin{align*}
(Y^{-1} \otimes Y^{-1} ) \Delta(Y) &= 
( T_{w_0}^{-1} J^{-1} Q^{1/2} \otimes  T_{w_0}^{-1} J^{-1} Q^{1/2} ) \Delta( Q^{-1/2}J T_{w_0}) \\
 &= ( T_{w_0}^{-1} \otimes T_{w_0}^{-1}) (J^{-1} \otimes J^{-1}) (Q^{1/2} \otimes Q^{1/2})  \Delta(Q^{-1/2})\Delta(J) \Delta(T_{w_0}) \\
 &= (Q^{1/2} \otimes Q^{1/2})  ( T_{w_0}^{-1} \otimes T_{w_0}^{-1}) (J^{-1} \otimes J^{-1}) \Delta(J) \Delta(T_{w_0}) \Delta(Q^{-1/2}),
 \end{align*}
where the last equality follows because $Q^{1/2}$ is central.
Then by Lemma \ref{Dj}:
\begin{align}
 \nonumber & (Y^{-1} \otimes Y^{-1} ) \Delta(Y)\\
 &= (Q^{1/2} \otimes Q^{1/2} )( T_{w_0}^{-1} \otimes T_{w_0}^{-1} ) 
\exp \Big( h \sum_{i, j \in I} (B^{-1})_{ij} H_i \otimes H_j \Big)
\Delta(T_{w_0}) \Delta(Q^{-1/2}) \\
\label{t13}&= (Q^{1/2} \otimes Q^{1/2}  ) 
\exp \Big( h \sum_{i, j \in I} (B^{-1})_{ij} H_i \otimes H_j \Big)
( T_{w_0}^{-1} \otimes T_{w_0}^{-1}) \Delta(T_{w_0}) \Delta(Q^{-1/2}) \\
\label{t14} &= (Q^{1/2} \otimes Q^{1/2}) R \Delta(Q^{-1/2}).
\end{align}
Here Equation (\ref{t13}) follows because $T_{w_0}$ permutes weight spaces as $w_0$, so $T_{w_0} H_i T_{w_0}^{-1} = -H_{\theta(i)}$. Equation (\ref{t14}) follows by Theorem \ref{sR}. The theorem follows by Equation (\ref{eq:thetaR}).
\end{proof}

\section{Realizing $\bar{R}$ using Sch\"utzenberger involution} \label{RS}

This section contains our first main result (Corollary \ref{answer2}), which realizes the unitarized $R$-matrix using a slight modification of Sch\"utzenberger involution. 
Fix two representations $V$ and $W$ of $U_q(\g)$. As discussed in Section \ref{xionreps}, there is a natural isomorphism $\sigma^{hk}_{V,W} : V \otimes W \rightarrow W \otimes V$ defined by
\begin{equation} \label{xx1}
\sigma^{hk}_{V,W} = \Flip \circ (\xi_V^{-1} \otimes \xi_W^{-1}) \circ \xi_{V \otimes W},
\end{equation}
where $\xi$ is Sch\"utzenberger involution. We have added inverses to the expression to make it more like Theorem \ref{answer1}. At the moment this has no effect, since $\xi$ is an involution, but it will be important later on. 

The commutor $\sigma^{hk}$ endows the Category of $U_q(\g)$ representations with a coboundary structure. In \cite{cactus}, Henriques and Kamnitzer note that one can multiply the action of $\xi$ on each irreducible representation by $\pm 1$, with the signs chosen independently for each $V_\lambda$, and Equation (\ref{xx1}) still defines a coboundary structure. They ask if there is a choice of signs such that the resulting commutor coincides with $\Flip \circ \bar{R}$, where $\bar{R}$ is the unitarized $R$ matrix. It turns out that we need  little bit more freedom. At the end of this section, we realize $\Flip \circ \bar{R}$ in terms of Sch\"utzenberger involution, where the action of $\xi$ on each irreducible representation is rescaled by certain $4^{th}$ roots of unity. It is convenient to first work with a different modification of $\xi$.

\begin{Definition} \label{xi11}
$ \xi'' $ is the element of $\widetilde{U_q(\g)}$ which acts on a weight vector $v \in V_\lambda$ by $ \xi''(v) = (-1)^{\langle\mu - w_0 ( \lambda), \rho^\vee\rangle } \xi(v)$, where $ \mu $ is the weight of $ v$. Notice, that $\mu - w_0 ( \lambda)$ is always in the root lattice, so $\langle\mu - w_0 ( \lambda), \rho^\vee\rangle$ is always an integer.
\end{Definition}

\begin{Proposition} \label{adanddiag}
$T_{w_0}, \xi''$ and $J$ are all invertible in $\widetilde{U_q(\g)}$, the actions of $C_{T_{w_0}}$, $C_{\xi''}$ and $C_{Q^{-1/2} J} = C_{J}$ all preserve the subalgebra $U_q(\g)$, and:
\begin{enumerate}

\item \label{adt} $
\begin{cases}
C_{T_{w_0}}(E_i) = -F_{\theta(i)} K_{\theta(i)} \\
C_{T_{w_0}}(F_i) = -K_{\theta(i)}^{-1} E_{\theta(i)} \\
C_{T_{w_0}}(K_H) = K_{w_0(H)}, \text{ so that } C_{T_{w_0}}(K_i) = K_{\theta(i)}^{-1}
\end{cases}
$

\item \label{adj} $
\begin{cases}
C_{J}(E_i) = K_i E_i  \\
C_{J}(F_i) = F_i K_i^{-1}\\
C_{J}(K_H) =  K_H
\end{cases}$

\item \label{adx}
$
\begin{cases}
C_{\xi''}(E_i)  = -F_{\theta(i)}\\
C_{\xi''}(F_i) = -E_{\theta(i)}\\
C_{\xi''}(K_H) = K_{w_0 \cdot H}
\end{cases}
$
\end{enumerate}
Furthermore, $ Y = \xi'' $ where, as in Section \ref{RY}, $Y= Q^{-1/2} J T_{w_0}$.
\end{Proposition}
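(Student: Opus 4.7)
The plan has three pieces: establish invertibility, verify the three conjugation formulas, and then deduce $Y = \xi''$ by combining (i) and (ii) to match (iii). Invertibility of $T_{w_0}$ and $J$ is immediate from Theorem \ref{comp}: both act by manifestly nonzero operators on each $V_\lambda$, hence are invertible in $\prod_\lambda \End V_\lambda \cong \widetilde{U_q(\g)}$. Invertibility of $\xi''$ follows since $\xi$ is an involution by Proposition \ref{bottotop} and $\xi''$ differs from $\xi$ only by $\pm 1$ signs on weight spaces. That $C_{T_{w_0}}$, $C_J$, and $C_{\xi''}$ preserve $U_q(\g)$ is a byproduct of the explicit formulas.

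Formula (i) is Lemma \ref{stillE}. For (ii), Definition \ref{defJ} describes $J$ as multiplication by $q^{(\mu,\mu)/2 + (\mu,\rho)}$ on the weight-$\mu$ space of any finite-dimensional representation. A direct calculation gives, for $v$ of weight $\mu$, $J E_i J^{-1} v = q^{(\alpha_i, \mu) + 2 d_i} E_i v$, which equals $(K_i E_i) v$ since $K_i$ acts on weight $\mu + \alpha_i$ as $q^{(\alpha_i, \mu) + 2 d_i}$ (using $(\alpha_i, \alpha_i) = 2 d_i$). The $F_i$ computation is symmetric and $J$ commutes with each $K_H$ (both are weight-diagonal); since $Q$ is central, $C_{Q^{-1/2} J} = C_J$. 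For (iii), the sign in Definition \ref{xi11} decomposes on $V_\lambda$ as $(-1)^{\langle \mu - w_0 \lambda, \rho^\vee \rangle} = (-1)^{\langle \lambda, \rho^\vee \rangle} \cdot (-1)^{\langle \mu, \rho^\vee \rangle}$, using $w_0 \rho^\vee = -\rho^\vee$. The first factor is constant on $V_\lambda$, so contributes a central scalar that drops out under conjugation; the second is a weight-diagonal operator $D$ satisfying $D E_i D^{-1} = (-1)^{\langle \alpha_i, \rho^\vee \rangle} E_i = -E_i$ (and symmetrically for $F_i$, trivially for $K_H$). Composing with $C_\xi$ from Section \ref{xionreps} yields the formulas in (iii).

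For $Y = \xi''$, I chain (i) and (ii): $C_J C_{T_{w_0}}(E_i) = C_J(-F_{\theta(i)} K_{\theta(i)}) = -F_{\theta(i)} K_{\theta(i)}^{-1} K_{\theta(i)} = -F_{\theta(i)}$, matching (iii); the $F_i$ and $K_H$ checks are parallel. Hence $C_Y = C_{\xi''}$, so $Y (\xi'')^{-1}$ commutes with $U_q(\g)$ and, by Schur's lemma, acts as a scalar on each $V_\lambda$. Comparing both sides on $v_\lambda$: Definition \ref{xi11} gives $\xi''(v_\lambda) = (-1)^{\langle \lambda - w_0 \lambda, \rho^\vee \rangle} v_\lambda^{\text{low}} = (-1)^{2 \langle \lambda, \rho^\vee \rangle} v_\lambda^{\text{low}}$, while Definition \ref{lowdef} combined with the scalar actions of $J$ on weight $w_0 \lambda$ and of $Q^{-1/2}$ on $V_\lambda$ give $Y v_\lambda = (-1)^{\langle 2 \lambda, \rho^\vee \rangle} q^E v_\lambda^{\text{low}}$, where the two $(\lambda, \lambda)/2$ contributions cancel and $E = 2(\lambda, \rho) + (w_0 \lambda, \rho) - (\lambda, \rho) = 0$ via $(w_0 \lambda, \rho) = -(\lambda, \rho)$. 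The powers of $-1$ agree, so the scalar is $1$.

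The main obstacle is the sign bookkeeping in this last comparison: the exponents in Definitions \ref{lowdef} and \ref{xi11} look superficially different, and the matching hinges on the $w_0$-identities $w_0 \rho = -\rho$ and $w_0 \rho^\vee = -\rho^\vee$. Once those are kept straight, and the cancellation of the $(\lambda, \lambda)/2$ terms between $J$ and $Q^{-1/2}$ is noted, the rest is routine weight-space arithmetic.
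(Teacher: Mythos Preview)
Your proof is correct and follows essentially the same route as the paper: invertibility via Theorem \ref{comp}, parts (i)--(iii) by the same direct computations (with (i) being Lemma \ref{stillE}), then $C_Y = C_{\xi''}$ by composing (i) and (ii), and finally the Schur-lemma comparison on $v_\lambda$. You supply more explicit detail than the paper does --- in particular the sign decomposition for (iii) and the exponent cancellation in $Yv_\lambda$ --- but the argument is the same.
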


\begin{proof}
It is clear from the definitions that these elements act as invertible endomorphisms on each $V_\lambda$, and hence by Theorem \ref{comp} they are invertible. Their conjugation actions preserve $U_q(\g)$ by (i), (ii) and (iii), which we prove below.

(\ref{adt}) This is Lemma \ref{stillE}.

(\ref{adj}) Let $v$ be a vector of weight $\mu$ in some finite dimensional representation. It is a straightforward calculation to see that $J(E_i v) = K_i E_i (J(v))$, $J(F_i(v))= F_i K_i^{-1} (J(v))$ and $J(K_H (v)) = K_{w_0 \cdot H} J(v)$.

(\ref{adx}) For $C_{\xi''} (E_i)$ and $C_{\xi''}( F_i)$ this follows immediately from Definition \ref{xi11} and the definition of Sch\"utzenberger involution (see Section \ref{xionreps}). It is a straightforward calculation to show that for any weight vector $v$, $\xi''(K_H (v)) = K_{w_0 (H)} \xi'' (v)$. It follows that $C_{\xi''} (K_H) = K_{w_0(H)}$.

It remains to show that $Y=\xi''$.
A direct calculation using (i), (ii) and (iii) shows that $C_{J} C_{T_{w_0}}= C_{\xi''}$. Since $Q^{-1/2}$ is central, this implies that $C_Y= C_{\xi''}$. 

For each $ \lambda$, there is a 1-dimensional family of endomorphisms of $ V_\lambda $ which are compatible with the automorphism $ C_Y = C_{\xi''} $ of $U_q(\g) $.  By Comment \ref{thediagram}, both $ Y $ and $ \xi'' $ give such endomorphisms.  Hence it suffices to check that they take the same value on one element of $ V_\lambda $, say the highest weight vector $ v_\lambda $.  

However, from the definition of $ Y$ and the definition of $v_\lambda^{low} $ (Definition \ref{lowdef}), we see that $Y(v_\lambda) = (-1)^{\langle 2\lambda, \rho^\vee \rangle} v_\lambda^\text{low}$.   On the other hand, from the definition of $ \xi'' $ we see immediately that $\xi''(v_\lambda) = (-1)^{\langle 2\lambda, \rho^\vee \rangle} v_\lambda^\text{low}$.
\end{proof}

The following corollaries give us the desired realization of the unitarized $R$ matrix.

\begin{Corollary} \label{Rxi11}
The unitarized R matrix acts on a tensor product $V \otimes W$ by
\begin{equation*}
\bar{R} (v \otimes w )= (\xi_V''^{-1} \otimes \xi_W''^{-1}) \circ \xi_{V \otimes W}'' (v \otimes w)
\end{equation*}
\end{Corollary}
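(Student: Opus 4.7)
The plan is to combine the two main results already established, namely Theorem \ref{answer1} and the identification $Y = \xi''$ from Proposition \ref{adanddiag}. Theorem \ref{answer1} gives the unitarized $R$-matrix as $\bar{R} = (Y^{-1} \otimes Y^{-1}) \Delta(Y)$ in the completion $\widetilde{U_q(\g) \otimes U_q(\g)}$, and Proposition \ref{adanddiag} then tells us we may substitute $\xi''$ for $Y$ throughout.

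First I would write $\bar{R} = (\xi''^{-1} \otimes \xi''^{-1}) \Delta(\xi'')$ as an element of the completion. Then I would interpret both sides as operators on the finite-dimensional representation $V \otimes W$. By the definition of the coproduct in $\widetilde{U_q(\g)}$ (as reviewed right after Corollary \ref{hatdef2}, where $\Delta(u)$ is characterized by how $u$ acts on tensor products), the action of $\Delta(\xi'')$ on $V \otimes W$ is exactly $\xi''_{V \otimes W}$, the action of $\xi''$ on the tensor product representation. Similarly, $\xi''^{-1} \otimes \xi''^{-1}$ acts factorwise on $V \otimes W$ as $\xi''^{-1}_V \otimes \xi''^{-1}_W$.

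Putting these two observations together gives
\begin{equation*}
\bar{R}(v \otimes w) = (\xi''^{-1}_V \otimes \xi''^{-1}_W) \circ \xi''_{V \otimes W}(v \otimes w),
\end{equation*}
which is the desired formula. There is really no obstacle here; the content has all been packaged into Theorem \ref{answer1} and Proposition \ref{adanddiag}, and the corollary is just the translation of the abstract identity in $\widetilde{U_q(\g) \otimes U_q(\g)}$ into its concrete action on representations.
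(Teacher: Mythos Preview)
Your proof is correct and follows exactly the same approach as the paper's own proof, which simply invokes Theorem \ref{answer1} together with the identification $Y = \xi''$ from Proposition \ref{adanddiag}. You spell out the passage from the identity in $\widetilde{U_q(\g) \otimes U_q(\g)}$ to operators on $V \otimes W$ a bit more explicitly than the paper does, but the content is identical.
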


\begin{proof}
This follows from Theorem \ref{answer1} since, by Proposition \ref{adanddiag}, $Y=\xi''$.
 \end{proof}

This is not quite what we were looking for since, for $v \in V_\lambda$, the relationship between $\xi(v)$ and $ \xi''(v) $ depends on the weight of $ v $, not just on $\lambda$.  To fix this problem, define $\xi' : V_\lambda \rightarrow V_\lambda $ by $ \xi'(v) = i^{2\langle\lambda, \rho^\vee\rangle} \xi(v)$. Notice that $\langle\lambda, \rho^\vee\rangle$ is in general only a half integer, so multiples of $i$ do appear.
We immediately deduce the following.
\begin{Corollary} \label{answer2}
The unitarized R matrix acts on a tensor product $V \otimes W$ by
\begin{equation*}
\bar{R} (v \otimes w) = (\xi_V'^{-1} \otimes \xi_W'^{-1}) \circ \xi_{V \otimes W}' (v \otimes w)
\end{equation*}
\end{Corollary}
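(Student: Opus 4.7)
The plan is to deduce Corollary~\ref{answer2} from the already-established Corollary~\ref{Rxi11} by pinning down the precise scalar relation between $\xi'$ and $\xi''$ and showing that the discrepancy is a group-like element that cancels in the formula for $\bar{R}$.

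First, I would introduce the auxiliary element $D \in \widetilde{U_q(\g)}$ which acts on a weight vector of weight $\mu$ by multiplication by $e^{i\pi\langle\mu,\rho^\vee\rangle}$. Because $\mu \in P$ gives $\langle\mu,\rho^\vee\rangle \in \tfrac{1}{2}\bz$, the scalars lie in $\{\pm 1,\pm i\} \subset \bc(q)$, so by Theorem~\ref{comp} this does define an element of $\widetilde{U_q(\g)}$. Since weight is additive on tensor products, $\Delta(D) = D\otimes D$, i.e.\ $D$ is group-like. Moreover, $\xi'$ carries the $\mu$-weight space to the $w_0\mu$-weight space, and the identity $w_0\rho^\vee = -\rho^\vee$ gives $\langle w_0\mu,\rho^\vee\rangle = -\langle\mu,\rho^\vee\rangle$; a direct scalar check on a weight vector therefore yields
\[
\xi'D\xi'^{-1} = D^{-1}, \qquad \text{equivalently} \qquad \xi'^{-1}D^{-1} = D\xi'^{-1}.
\]

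Second, I would verify $\xi'' = \xi' D$. By Theorem~\ref{comp} it suffices to check on each $V_\lambda$; for $v \in V_\lambda$ of weight $\mu$,
\[
\xi'(Dv) = e^{i\pi\langle\mu,\rho^\vee\rangle} \cdot i^{2\langle\lambda,\rho^\vee\rangle}\,\xi(v) = e^{i\pi(\langle\mu,\rho^\vee\rangle + \langle\lambda,\rho^\vee\rangle)}\,\xi(v),
\]
and using $\langle\lambda,\rho^\vee\rangle = -\langle w_0\lambda,\rho^\vee\rangle$ this equals $e^{i\pi\langle\mu - w_0\lambda,\rho^\vee\rangle}\xi(v) = (-1)^{\langle\mu - w_0\lambda,\rho^\vee\rangle}\xi(v) = \xi''(v)$, matching Definition~\ref{xi11}.

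Finally, substituting $\xi'' = \xi' D$ into Corollary~\ref{Rxi11} and using that $\Delta$ is an algebra homomorphism together with $\Delta(D) = D\otimes D$,
\[
(\xi''^{-1}\otimes\xi''^{-1})\Delta(\xi'') = (D^{-1}\otimes D^{-1})(\xi'^{-1}\otimes\xi'^{-1})\Delta(\xi')\,(D\otimes D).
\]
Now $\Delta(\xi')(D\otimes D) = \Delta(\xi' D) = \Delta(D^{-1}\xi') = (D^{-1}\otimes D^{-1})\Delta(\xi')$, and then applying $\xi'^{-1}D^{-1} = D\xi'^{-1}$ in each tensor slot folds the middle factor into $(D\otimes D)(\xi'^{-1}\otimes\xi'^{-1})$; the two $(D^{\pm 1}\otimes D^{\pm 1})$ pieces cancel, leaving $(\xi'^{-1}\otimes\xi'^{-1})\Delta(\xi')$, which is the desired formula for $\bar{R}$.

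The only real bookkeeping subtlety is that $\langle\lambda,\rho^\vee\rangle$ is generally a half-integer, which is precisely why the definition of $\xi'$ uses $i$ rather than $\pm 1$. The key algebraic fact making the proof go through cleanly is $\langle w_0\lambda + \lambda,\rho^\vee\rangle = 0$: this is exactly what forces the correction $\xi''\xi'^{-1}$ to depend only on the weight $\mu$ and not separately on the irreducible component $V_\lambda$, so that the discrepancy is encoded by a single group-like element $D$ and can be eliminated by the commutation identity above.
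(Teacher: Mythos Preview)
Your proof is correct and is precisely the ``straightforward calculation'' the paper leaves to the reader: both deduce the result from Corollary~\ref{Rxi11}, and your organization via the group-like element $D$ (encoding the weight-only discrepancy between $\xi'$ and $\xi''$) together with the commutation relation $\xi' D = D^{-1}\xi'$ is a clean way to carry out that calculation.
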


\begin{proof}
Follows from Corollary \ref{Rxi11} by a straightforward calculation.
\end{proof}

\begin{Comment}
Notice that $\xi'_{V_\lambda} \circ \xi'_{V_\lambda} = (-1)^{\langle 2 \lambda, \rho^\vee \rangle} \Id$, and in particular $\xi'$ does not in general square to the identity, as required by Henriques and Kamnitzer. However, their argument can be modified slightly to show directly that $\Flip \circ  (\xi_V'^{-1} \otimes \xi_W'^{-1}) \circ\xi'_{V \otimes W}$ still defines a commutor which satisfies the axioms of a coboundary category. We do not include this, since it follows from the corresponding fact for  $\Flip \circ \bar{R}$.
\end{Comment}

\section{The action of $\bar{R}$ on a tensor product of crystal bases} \label{ocb} \label{RC}
This section contains our second main result, namely an explicit relationship between Drinfeld's commutor and the crystal commutor. Roughly, Theorem \ref{main2} shows that the crystal commutor is the crystal limit of Drinfeld's commutor (modulo some signs).  

Recall that if $ (\LL, B) $ is  a crystal basis for a representation $ V$, we have both a linear map $ \xi'_V : V \rightarrow V $ and a map of sets $ \xi_B : B \rightarrow B $ (coming from regarding $ B $ as an abstract crystal).  
The following proposition follows immediately from Theorem \ref{xionbasis}.

\begin{Proposition} \label{xi'onbasis}
Let $ (\LL, B) $ be a crystal basis for a representation $ V $.  Then:
\begin{enumerate}
\item  $ \xi'_V(\LL) = \LL$. 
\item By (i), $ \xi'_V $ gives rise to a map between $ {\xi'}_V^\modq : \LL/q^{-1} \LL \rightarrow \LL/q^{-1}\LL $.  For each $ b \in B $, we have 
\begin{equation*} {\xi'}^\modq_V(b) = i^{\langle \lambda, 2 \rho^\vee \rangle} \xi_B(b) \end{equation*} 
where $ \lambda $ is the highest weight of the crystal component containing $ b $. \qed
\end{enumerate}
\end{Proposition}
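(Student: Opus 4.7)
The plan is to reduce everything to Theorem \ref{xionbasis} by noting that on each irreducible summand $V_\lambda$, the operator $\xi'_V$ differs from $\xi_V$ only by multiplication by the scalar $c_\lambda := i^{2\langle \lambda, \rho^\vee \rangle}$. Since $V$ may be reducible, I would first invoke Theorem \ref{uniquecb} to fix a decomposition $V \cong \bigoplus_j V_{\lambda_j}$ compatible with the crystal basis, so that $\LL = \bigoplus_j \LL_{\lambda_j}$ and $B = \bigsqcup_j B_{\lambda_j}$. Both $\xi_V$ and $\xi'_V$ preserve this decomposition (they are defined component-by-component), so it suffices to prove the two claims on each summand $V_{\lambda_j}$ separately.

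For part (i), the scalar $c_\lambda$ lies in $\{1,-1,i,-i\}$ and in particular is a unit in $\Aa_\infty$, so multiplication by $c_\lambda$ preserves the lattice $\LL_\lambda$. Combined with $\xi_V(\LL_\lambda) = \LL_\lambda$ from Theorem \ref{xionbasis}(i), this gives $\xi'_V(\LL_\lambda) = c_\lambda \cdot \xi_V(\LL_\lambda) = \LL_\lambda$. For part (ii), reducing $\xi'_V = c_\lambda \xi_V$ modulo $q^{-1}\LL_\lambda$ and applying Theorem \ref{xionbasis}(ii) yields, for each $b \in B_{\lambda}$,
\begin{equation*}
{\xi'}^\modq_V(b) = c_\lambda \cdot \xi^\modq_V(b) = i^{\langle \lambda, 2\rho^\vee\rangle} \xi_B(b),
\end{equation*}
which is exactly the desired formula. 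There is no real obstacle here; the entire content is the observation that rescaling by a constant unit scalar is harmless at the level of the lattice and descends to the same scalar on the crystal basis.
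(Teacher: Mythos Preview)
Your proposal is correct and matches the paper's own treatment: the paper simply states that the proposition ``follows immediately from Theorem \ref{xionbasis}'' and marks it with a \qed, and your argument is precisely the unpacking of that immediate deduction --- $\xi'_V$ differs from $\xi_V$ on each irreducible summand by the unit scalar $i^{2\langle \lambda,\rho^\vee\rangle}\in\Aa_\infty^\times$, so Theorem \ref{xionbasis} transfers directly.
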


Now let $(\LL, A)$ and $(\mathcal{M}, B)$ be crystal bases for two finite dimensional representations $V$ and $W$ of $U_q(\g)$. The crystal commutor defines a map $\sigma_{A,B}: A \otimes B \rightarrow B \otimes A$.  This map comes from Drinfeld's commutor $ \sigma^{dr} = \Flip \circ \bar{R} $ in the following sense.

\begin{Theorem} \label{main2}
With the above setup:
\begin{enumerate}
\item $ \sigma^{dr}_{V,W}(\LL \otimes \mathcal{M}) = \mathcal{M} \otimes \LL $
\item By (i), $ \sigma^{dr}_{V,W}$ gives rise to a map 
\begin{equation*} {\sigma^{dr}_{V,W}}^{(mod \, q^{-1}(\LL \otimes \mathcal{M}))} : (\LL \otimes \mathcal{M}) / q^{-1} ( \LL \otimes \mathcal{M}) \rightarrow (\mathcal{M} \otimes \LL) / q^{-1} (\mathcal{M} \otimes \LL). 
\end{equation*} 
For all $ a \in A, b \in B$, 
\begin{equation*}
 {\sigma^{dr}_{V,W}}^{(mod \, q^{-1} (\LL \otimes \mathcal{M}))} (a \otimes b) = (-1)^{\langle \lambda + \mu - \nu, \rho^\vee \rangle}  \sigma_{A, B}(a \otimes b)
\end{equation*}
where $ \lambda, \mu$ and  $\nu $ are the highest weights of the components of $A, B$ and $A \otimes B$ containing $ a, b$ and $ a \otimes b $ respectively.
\end{enumerate}

\end{Theorem}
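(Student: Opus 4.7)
\bigskip

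\noindent\textbf{Proof plan for Theorem \ref{main2}.}

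The plan is to simply chase an element $a\otimes b$ through the composition
\[
\sigma^{dr}_{V,W} \;=\; \Flip \circ \bar{R} \;=\; \Flip \circ (\xi'^{-1}_V \otimes \xi'^{-1}_W) \circ \xi'_{V \otimes W},
\]
which is how Corollary \ref{answer2} realizes Drinfeld's commutor, and compare the result term-by-term with the second form of the crystal commutor,
\[
\sigma_{A,B}(a\otimes b) \;=\; \Flip \circ (\xi_A \otimes \xi_B) \circ \xi_{A \otimes B}(a \otimes b),
\]
as given in (\ref{commuter_definition}). Since $(\LL\otimes\mathcal{M}, A\otimes B)$ is itself a crystal basis (of $V\otimes W$), Proposition \ref{xi'onbasis} applies equally to $V$, $W$ and $V\otimes W$, and already gives part (i): each of $\xi'_V, \xi'_W, \xi'_{V\otimes W}$ preserves its respective lattice, hence so does the composition above, and $\Flip$ carries $\LL\otimes\mathcal{M}$ to $\mathcal{M}\otimes\LL$.

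For part (ii), I would work modulo $q^{-1}$ throughout. First, applying Proposition \ref{xi'onbasis} to $V\otimes W$ gives
\[
\xi'_{V\otimes W}(a\otimes b) \;\equiv\; i^{\langle 2\nu,\rho^\vee\rangle}\,\xi_{A\otimes B}(a\otimes b) \pmod{q^{-1}}.
\]
Write $\xi_{A\otimes B}(a\otimes b) = a' \otimes b'$. The key combinatorial point, which I would spell out explicitly, is that $a' \in A$ lies in the \emph{same} irreducible subcrystal as $a$ (highest weight $\lambda$) and similarly $b'$ lies in the same irreducible subcrystal as $b$ (highest weight $\mu$). This is because each irreducible component of $A\otimes B$ is contained in a single $A_\lambda \otimes B_\mu$ (the tensor product rule for $e_i, f_i$ only manipulates the two factors separately, so crystal operators preserve the decomposition of $A\otimes B$ into products of irreducible subcrystals), and $\xi_{A\otimes B}$ stays inside each irreducible component.

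Then, applying Proposition \ref{xi'onbasis} to $V$ and $W$ (and noting that $\xi_V$ is an involution, so ${\xi'}_V^{-1}$ acts modulo $q^{-1}$ on the crystal component of highest weight $\lambda$ as $i^{-\langle 2\lambda,\rho^\vee\rangle}\xi_A$, and similarly for $W$),
\[
(\xi'^{-1}_V \otimes \xi'^{-1}_W)(a' \otimes b') \;\equiv\; i^{-\langle 2\lambda+2\mu,\rho^\vee\rangle}\,\xi_A(a') \otimes \xi_B(b') \pmod{q^{-1}}.
\]
Composing with $\Flip$ and recognizing $\Flip\circ(\xi_A\otimes\xi_B)(a'\otimes b') = \sigma_{A,B}(a\otimes b)$ yields
\[
\sigma^{dr}_{V,W}(a\otimes b) \;\equiv\; i^{\,2\langle \nu-\lambda-\mu,\,\rho^\vee\rangle}\,\sigma_{A,B}(a\otimes b) \pmod{q^{-1}}.
\]
Since $\nu-\lambda-\mu$ lies in the root lattice, the exponent is an even integer times $\langle\cdot,\rho^\vee\rangle$, and $i^{\,2\langle \nu-\lambda-\mu,\,\rho^\vee\rangle} = (-1)^{\langle\lambda+\mu-\nu,\,\rho^\vee\rangle}$, which is the claimed sign.

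The only genuinely non-formal step is the observation about $a'$ and $b'$ remaining in the components $A_\lambda$ and $B_\mu$; once this is in hand, the rest of the argument is simply chasing the three scalars $i^{2\langle\nu,\rho^\vee\rangle}$, $i^{-2\langle\lambda,\rho^\vee\rangle}$, $i^{-2\langle\mu,\rho^\vee\rangle}$ through the composition. I expect the main (very mild) obstacle to be writing this decomposition observation cleanly, since it is intuitively obvious but deserves a one-line justification from the tensor product rule (\ref{edef})--(\ref{fdef}).
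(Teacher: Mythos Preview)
Your proposal is correct and is exactly the argument the paper gives, just with the details spelled out: the paper's proof literally says ``(ii) follows directly from Corollary \ref{answer2} and Proposition \ref{xi'onbasis}.ii,'' and you have unpacked what ``directly'' means by tracking the three scalars $i^{\langle 2\nu,\rho^\vee\rangle}$, $i^{-\langle 2\lambda,\rho^\vee\rangle}$, $i^{-\langle 2\mu,\rho^\vee\rangle}$ through the composition. Your observation that $a'$ and $b'$ remain in the components of highest weight $\lambda$ and $\mu$ is a point the paper leaves entirely implicit, and your one-line justification via the tensor product rule is the right way to handle it.
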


\begin{proof}
By Corollary \ref{answer2} and Proposition \ref{xi'onbasis}.i applied to the crystal bases $ (\LL, A), (\mathcal{M}, B)$ and $(\LL \otimes \mathcal{M}, A \otimes B) $:
\begin{equation*}
 \Flip \circ \bar{R}(\LL \otimes \mathcal{M}) = \text{Flip} \circ ({\xi'}_V^{-1} \otimes {\xi'}_W^{-1}) \circ \xi'_{V \otimes W} (\LL \otimes \mathcal{M}) = \text{Flip}(\LL \otimes \mathcal{M}) = \mathcal{M} \otimes \LL.
\end{equation*}
This establishes (i).

Similarly, (ii) follows directly from Corollary \ref{answer2} and Proposition \ref{xi'onbasis}.ii.
\end{proof}
 
Note that consistently working modulo the lattices, one can see that the coboundary properties of Drinfeld's commutor $ \sigma $ are transferred to the crystal commutor.  Of course it is very easy to prove the coboundary properties of the crystal commutor directly, but we feel this gives some explanation as to why these properties arise.

\section{Questions} \label{Questions}

We finish with a short discussion of some questions we feel merit further exploration.

\begin{Question}
For each connected subgraph $\Gamma$ of the Dynkin diagram, let $\g_\Gamma$ be the corresponding Levi subalgebra of $\g$, and $U_h(\g_\Gamma)$ be the corresponding Levi subalgebra of $U_h(\g)$. One can define $\xi''_\Gamma= Q_\Gamma^{-1/2} J_\Gamma T_{w_\Gamma}$, where $Q_\Gamma$ and $J_\Gamma$ act on a representation $V$ of $U_h(\g)$ via the obvious functor to $U_h(\g_\Gamma)$ representations, and $T_{w_\Gamma}$ is the braid group element corresponding to the longest word in the Weyl group of $\g_\Gamma$. These $\xi''_\Gamma$ are invertible, so they generate a group $\mathcal{W}$ acting on $U_h(\g)$ (which preserves only the algebra structure), and on representations of $U_h(\g)$. What group is this?

\end{Question}

We believe that answering this question would be an important step in understanding the relationship between the braid group and the cactus group. In the case $\g = \mbox{sl}_n$ we hope that $\mathcal{W}$ is closely related to the cactus group, where, if $\Gamma$ consists of nodes $s$ through $t-1$ of the Dynkin diagram, $\xi''_\Gamma$ corresponds to the generator $r_{[s,t]}$ of the cactus group (see \cite[Section 3]{cactus}). It cannot agree exactly since ${\xi''_\Gamma}^2 \neq \Id$ (only ${\xi''_\Gamma}^4 = \Id$).

\begin{Question}
The commutor gives an action of the $n$-fruit cactus group $J_n$, and hence its group algebra, on tensor products $V \otimes \cdots \otimes V$ of $U_q(\g)$ modules.  Does this action factor through any quotient algebra in some special cases?  For example what about the case when $ \g = \mathrm{sl}_n $ and each $ V $ is the standard representation.  What about the corresponding action on tensor products of crystals?
\end{Question}

For the case of the braiding, the corresponding question has a nice answer in the above special case.  The action of the braid group factors through the Hecke algebra and we have the quantum Schur-Weyl duality.

\end{document}